\def\le{\leqslant}
\def\a{\alpha}
\def\D{\Delta}
\def\s{\sigma}
\def\i{^{-1}}
\def\<{\langle}
\def\>{\rangle}
\newcommand{\fkp}{\ensuremath{\mathfrak{p}}\xspace}
\newcommand{\fkO}{\ensuremath{\mathfrak{O}}\xspace}
\newcommand{\BF}{\ensuremath{\mathbb {F}}\xspace}
\newcommand{{\BG}}{\ensuremath{\mathbb {G}}\xspace}
\newcommand{{\BK}}{\ensuremath{\mathbb {K}}\xspace}
\newcommand{\BN}{\ensuremath{\mathbb {N}}\xspace}
\newcommand{\BQ}{\ensuremath{\mathbb {Q}}\xspace}
\newcommand{\BR}{\ensuremath{\mathbb {R}}\xspace}
\newcommand{\BS}{\ensuremath{\mathbb {S}}\xspace}
\newcommand{\BZ}{\ensuremath{\mathbb {Z}}\xspace}
\newcommand{\CA}{\ensuremath{\mathcal {A}}\xspace}
\newcommand{\CB}{\ensuremath{\mathcal {B}}\xspace}
\newcommand{\CH}{\ensuremath{\mathcal {H}}\xspace}
\newcommand{\CT}{\ensuremath{\mathcal {T}}\xspace}
\newcommand{\CX}{\ensuremath{\mathcal {X}}\xspace}
\newcommand{\tF}{{\widetilde{F}}}
\newcommand{\tH}{{\widetilde{H}}}
\newcommand{\tL}{\widetilde{L}}
\newcommand{\ta}{{\widetilde{a}}}
\newcommand{\tb}{{\widetilde{b}}}
\newcommand{\tc}{{\widetilde{c}}}
\newcommand{\ad}{{\mathrm{ad}}}
\DeclareMathOperator{\Aut}{Aut}
\DeclareMathOperator{\diag}{diag}
\DeclareMathOperator{\Gal}{Gal}
\newcommand{\GL}{\mathrm{GL}}
\DeclareMathOperator{\Nm}{Nm}
\newcommand{\PGL}{{\mathrm{PGL}}}
\newcommand{\red}{\ensuremath{\mathrm{red}}\xspace}
\DeclareMathOperator{\Res}{Res}
\newcommand{\SU}{{\mathrm{SU}}}
\newcommand{\Sc}{{\mathrm{sc}}}
\newcommand{\U}{\mathrm{U}}
\newtheorem{theorem}{Theorem}
\newtheorem{proposition}[theorem]{Proposition}
\newtheorem{lemma}[theorem]{Lemma}
\newtheorem{corollary}[theorem]{Corollary}
\theoremstyle{definition}
\newtheorem{definition}[theorem]{Definition}
\newtheorem{remark}[theorem]{Remark}
\def\af{\text{af}}
\def\der{\text{der}}
\def\pr{\text{pr}}
\def\kk{\mathbf k}
\numberwithin{equation}{section}
\numberwithin{theorem}{section}
\renewcommand{\to}{%
   \ifbool{@display}{\longrightarrow}{\rightarrow}%
   }
\let\shortmapsto\mapsto
\renewcommand{\mapsto}{%
   \ifbool{@display}{\longmapsto}{\shortmapsto}%
   }
\newlength{\olen}
\newlength{\ulen}
\newlength{\xlen}
\newcommand{\xra}[2][]{%
   \ifbool{@display}%
      {\settowidth{\olen}{$\overset{#2}{\longrightarrow}$}%
       \settowidth{\ulen}{$\underset{#1}{\longrightarrow}$}%
       \settowidth{\xlen}{$\xrightarrow[#1]{#2}$}%
       \ifdimgreater{\olen}{\xlen}%
          {\underset{#1}{\overset{#2}{\longrightarrow}}}%
          {\ifdimgreater{\ulen}{\xlen}%
             {\underset{#1}{\overset{#2}{\longrightarrow}}}
             {\xrightarrow[#1]{#2}}}}%
      {\xrightarrow[#1]{#2}}
   }
\newcommand{\xyra}[2][]{%
   \settowidth{\xlen}{$\xrightarrow[#1]{#2}$}%
   \ifbool{@display}%
      {\settowidth{\olen}{$\overset{#2}{\longrightarrow}$}%
       \settowidth{\ulen}{$\underset{#1}{\longrightarrow}$}%
       \ifdimgreater{\olen}{\xlen}%
          {\mathrel{\xymatrix@M=.12ex@C=3.2ex{\ar[r]^-{#2}_-{#1} &}}}%
          {\ifdimgreater{\ulen}{\xlen}%
             {\mathrel{\xymatrix@M=.12ex@C=3.2ex{\ar[r]^-{#2}_-{#1} &}}}
             {\mathrel{\xymatrix@M=.12ex@C=\the\xlen{\ar[r]^-{#2}_-{#1} &}}}}}%
      {\mathrel{\xymatrix@M=.12ex@C=\the\xlen{\ar[r]^-{#2}_-{#1} &}}}%
   }
\newcommand{\xla}[2][]{%
   \ifbool{@display}%
      {\settowidth{\olen}{$\overset{#2}{\longleftarrow}$}%
       \settowidth{\ulen}{$\underset{#1}{\longleftarrow}$}%
       \settowidth{\xlen}{$\xleftarrow[#1]{#2}$}%
       \ifdimgreater{\olen}{\xlen}%
          {\underset{#1}{\overset{#2}{\longleftarrow}}}%
          {\ifdimgreater{\ulen}{\xlen}%
             {\underset{#1}{\overset{#2}{\longleftarrow}}}
             {\xleftarrow[#1]{#2}}}}%
      {\xleftarrow[#1]{#2}}
   }
\newcommand{\isoarrow}{%
   \ifbool{@display}{\overset{\sim}{\longrightarrow}}{\xrightarrow\sim}%
   }
 \newcommand{\bF}{{\breve{F}}}
\newcommand{\bW}{{\breve{W}}}
\newcommand{\bl}{{\breve{l}}}
\newcommand{\bb}{{\breve{b}}}
\newcommand{\bI}{{\breve{I}}}
\newcommand{\ba}{{\breve{a}}}
\newcommand{\bc}{{\breve{c}}}
\newcommand{\bs}{{\breve{s}}}
\newcommand{\bw}{{\breve{w}}}
\newcommand{\bv}{{\breve{v}}}
\newcommand{\bal}{{\breve{\mathbcal{a}}}}
\newcommand{\bBS}{\breve{\BS}}
\newcommand{\al}{{{\mathbcal{a}}}}
\begin{document}

\title[]{Tits groups of affine Weyl groups}
\author{Radhika Ganapathy}
\address{Department of Mathematics, Indian Institute of Science, Bengaluru, Karnataka  - 560012, India.}
\email{radhikag@iisc.ac.in}

\keywords{$p$-adic groups, Hecke algebras, Tits groups}
\subjclass[2010]{22E50, 20C08}


\begin{abstract} Let $G$ be a connected, reductive group over a non-archimedean local field $F$. Let $\bF$ be the completion of the maximal unramified extension of $F$ contained in a separable closure $F_s$.  In this article, we construct a Tits group of the affine Weyl group of $G(F)$ when the derived subgroup of $G_\bF$ does not contain a simple factor of unitary type. If $G$ is a quasi-split ramified odd unitary group, we show that there always exist representatives in $G(F)$ of affine simple reflections that satisfy Coxeter relations (which is weaker than asking for the existence of a Tits group). If $G = \U_{2r}, r \geq 3,$ is a quasi-split ramified even unitary group, we show that there don't even exist representatives in $G(F)$ of the affine simple reflections that satisfy Coxeter relations.

\end{abstract}

\maketitle

\bigskip

\section{Introduction}

Let $F$ be a non-archimedean local field and let $G$ be a connected, reductive group over $F$. Let $W_\af$ denote the affine Weyl group of $G(F)$ and let $W$ denote the Iwahori-Weyl group of $G(F)$. In \cite{GH23}, the notion of a Tits group of the Iwahori-Weyl group was introduced. Let us first explain a motivation for studying the question on existence of a Tits group of the Iwahori-Weyl group.
It arises naturally from the question of establishing a variant of the Kazhdan isomorphism for general connected reductive groups. Let us briefly recall this work of Kazhdan. First,  given a local field $F'$ of characteristic $p$ and an integer $m \geq 1$, there exists a local field $F$ of characteristic 0 such that $F'$ is $m$-close to $F$, i.e., $\fkO_F/\fkp_F^m \cong \fkO_{F'}/\fkp_{F'}^m$. Let $G$ be a split, connected reductive group defined over $\BZ$. For an object $X$ associated to the field $F$, we will use the notation $X'$ to denote the corresponding object over $F'$. 
 In \cite{kaz86}, Kazhdan proved that  given $n \geq 1$, there exists $l \geq n$ such that if $F$ and $F'$ are $l$-close, then there is an algebra isomorphism $\mathrm{Kaz}_n:\mathscr{H}(G(F), K_n) \rightarrow \mathscr{H}(G(F'), K_n')$, where $K_n$ is the $n$-th usual congruence subgroup of $G(\fkO_F)$. 
Hence, when the fields $F$ and $F'$ are sufficiently close, we have a bijection
 \begin{align*}
 &\text{\{Iso. classes of irr.  admissible representations $(\Pi, V)$ of $G(F)$ such that $\Pi^{K_n} \neq 0$\}} \\
  &\longleftrightarrow\text{\{Iso. classes of irr. admissible representations  $(\Pi', V')$ of $G(F')$ such that $\Pi'^{K_n'} \neq 0$\}}. 
\end{align*}
Let $I$ be an Iwahori subgroup of $G(F)$ and for $n \in \BN$, let $I_n$ be the $n$-th congruence subgroup of $I$. In \cite{How85}, Howe discovered a nice presentation of $\CH(G(F), I_n)$ when $G=GL_m(F)$. Here the generators are the characteristic functions $\mathbbm{1}_{g I_n}$ for $g \in I/I_n$ and $\mathbbm{1}_{I_n m(w) I_n}$, where $w$ runs over elements of $W$ of length $0$ and $1$, and $m(w)$ is a nice representative of $w$ in $G(F)$. This presentation is a refinement of the Iwahori-Matsumoto presentation and has some nice applications to the representation theory of $p$-adic groups. In more detail, Lemaire (see \cite{Lem01}) used this presentation of Howe and proved that if the fields $F$ and $F'$ are $n$-close, then the Hecke algebras $\mathscr{H}(\GL_m(F), I_n)$ and  $\mathscr{H}(\GL_m(F'), I_n')$ are isomorphic. Hence, when the fields $F$ and $F'$ are $n$-close, we have a bijection
 \begin{align}\label{eq:In}
 &\text{\{Iso. classes of irr.  admissible representations $(\Pi, V)$ of $GL_m(F)$ such that $\Pi^{I_n} \neq 0$\}} \\
  &\longleftrightarrow\text{\{Iso. classes of irr. admissible representations  $(\Pi', V')$ of $\GL_m(F')$ such that $\Pi'^{I_n'} \neq 0$\}}. \nonumber
\end{align}
Lemaire further showed that if $\Pi \leftrightarrow \Pi'$ as in \eqref{eq:In}, and one is generic, then so is the other; this proof relies on the presentation written down by Howe. The results of Howe and Lemaire have been generalized to split, connected, reductive groups, and these results have interesting applications to the local Langlands correspondence (see \cite{Gan15, GV17}). To establish this variant of the Kazhdan isomorphism for general connected reductive groups, it is crucial to choose a nice enough set of representatives of the Iwahori-Weyl group of $G(F)$ that depend on the field $F$ as minimally as possible, so that the ``corresponding" set of representatives can be considered over the field $F'$. This question of choosing a nice enough set of representatives of the elements of the Iwahori-Weyl group leads to the question on the existence of a Tits group of the Iwahori-Weyl group, whose study was initiated in \cite{GH23}. There, the notion of a Tits group $\CT_\af$ of the affine Weyl group $W_\af$ and $\CT$ of the Iwahori-Weyl group $W$ were defined, modelling the case of a finite Weyl group due to Tits (see \cite{Ti}), and the following theorem was proved.
\begin{theorem}[\S 5 and \S 6 of \cite{GH23}]\label{thm:intro0} If  $G$ splits over an unramified extension of $F$, a Tits group $\CT$ of $W$ exists and contains a Tits group $\CT_\af$ of $W_\af$.
\end{theorem}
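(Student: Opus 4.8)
The plan is to construct the representatives over $\bF$, where by hypothesis $G$ becomes split, and then to descend to $F$ along the Frobenius $\sigma$. First I would reduce to the case of a simple, simply connected $G$, since both $W_\af$ and the sought extension are compatible with direct products and with the central isogeny $G_\Sc\to G$. Fix a maximal $\bF$-split torus $S$ defined over $F$, its centralizer $T$ (a maximal torus of $G_\bF$), a base alcove $\fka$ in the apartment of $S$, and an $F$-rational uniformizer $\varpi$. Because $G$ splits over an unramified extension, $\sigma$ acts through a pinned automorphism, so I may fix a $\sigma$-stable Chevalley pinning $(T,B,\{x_\alpha\})$. Writing $\tilde W=N(\bF)/T(\bF)_1$ for the Iwahori--Weyl group over $\bF$ (with $T(\bF)_1$ the maximal bounded subgroup of $T(\bF)$), the affine simple reflections $s_0,\dots,s_\ell$ of $W_\af(\bF)$ are the reflections in the walls of $\fka$, and the Iwahori--Weyl group $W$ of $G(F)$ together with its affine part $W_\af$ is recovered from the $\sigma$-action (Haines--Rapoport): the $F$-affine simple reflections correspond to the $\sigma$-orbits on $\{s_0,\dots,s_\ell\}$, and $W=W_\af\rtimes\Omega$ with $\Omega$ the stabilizer of $\fka$.

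Next I would write down the representatives over $\bF$. To each affine simple root $a$ is attached the rank-one subgroup $M_a\cong\SL_2$ generated by the affine root groups $U_{\pm a}$; using the pinning and $\varpi$ this $M_a$ carries the usual $\SL_2$-coordinates, and I set $\tilde n_a$ to be its canonical Weyl element --- Tits' element $x_a(1)x_{-a}(-1)x_a(1)$ for the finite nodes, and its uniformizer-twisted analogue (built from $x_{-\theta}(\varpi)$ and $x_{\theta}(-\varpi^{-1})$, where $\theta$ is the highest root) for the affine node $s_0$. A direct $\SL_2$ computation then gives $\tilde n_a^2=a^\vee(-1)\in T(\bF)_1$, where $a^\vee$ is the coroot of the gradient of $a$.

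It remains to verify the relations and to descend. Any two affine simple reflections generate a finite dihedral group, so $\tilde n_a,\tilde n_b$ lie in a common rank-two subgroup and the braid relation between them reduces to the rank-two computation carried out by Tits for finite Weyl groups in \cite{Ti}; together with the formula for $\tilde n_a^2$ this shows the $\tilde n_a$ generate a Tits group $\CT_\af(\bF)$ of $W_\af(\bF)$. For the descent I would attach to each $\sigma$-orbit $O$ the representative $\tilde n_O$ given by the relevant product of the $\tilde n_a$, $a\in O$ (for instance $\tilde n_a\tilde n_b$ when $\{a,b\}$ is an orthogonal orbit). Since the pinning and $\varpi$ are $\sigma$-stable, $\sigma$ permutes the factors, and using the braid relations just established one checks that $\tilde n_O$ is $\sigma$-fixed (hence lies in $G(F)$), has square in $T(F)_1$, and that the $\tilde n_O$ satisfy the Coxeter and braid relations of $W_\af$; they generate $\CT_\af$. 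Finally, each element of $\Omega$ has length zero and acts on $\fka$ by an automorphism of the affine Dynkin diagram; lifting it to an element of $N(\bF)^\sigma$ normalizing the Iwahori and permuting the $\tilde n_O$ accordingly, and adjoining these lifts, produces the Tits group $\CT\supseteq\CT_\af$ mapping onto $W$.

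I expect the descent step to be the main obstacle. When a $\sigma$-orbit consists of mutually orthogonal nodes the factors of $\tilde n_O$ commute and $\sigma$-invariance is immediate; the delicate case is an orbit of two \emph{adjacent} nodes (the $A_{2n}$-type folding), where the folded reflection is the longest element $s_is_{i+1}s_i$, its representative is $\tilde n_O=\tilde n_i\tilde n_{i+1}\tilde n_i$, and $\sigma$-invariance relies on the order-three braid relation $\tilde n_i\tilde n_{i+1}\tilde n_i=\tilde n_{i+1}\tilde n_i\tilde n_{i+1}$ while the square $\tilde n_O^2$ must be computed by hand and matched against the required torus element. In the unramified setting each $M_a$ is an honest $\SL_2$ and these finite computations close up; it is precisely the breakdown of the analogous computation for the ramified $\SU_3$-type rank-one groups that the remainder of the paper is devoted to.
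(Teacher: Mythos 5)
Your overall strategy --- construct Tits representatives over $\bF$ using a $\sigma$-stable pinning, check the braid relations and the squares there, then descend along $\sigma$-orbits and adjoin lifts of $\Omega$ --- is exactly the route of \cite{GH23} (and of \S\ref{sec:TitsBF}--\S\ref{sec:descend} of this paper, which generalize it). The squares computation, the treatment of orbits (including the $A_{2}$-type folding $\tilde n_i\tilde n_{i+1}\tilde n_i$), and the role of $\Omega$ are all correctly identified.

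There is, however, a genuine gap at the step where you assert that the braid relation between $\tilde n_a$ and $\tilde n_b$ ``reduces to the rank-two computation carried out by Tits for finite Weyl groups.'' Tits' computation applies to representatives attached to a \emph{basis} of a root system, and Bruhat--Tits' substitute (Proposition 6.1.8 of \cite{BT1}, used in \cite{GH23}) requires the two gradients to sit as $\bb^{(1)}$ and $\bb^{(k)}$ in a circular ordering of the rank-two subsystem they span. For the affine node the gradient is $-\theta$, and when $G$ is split of type $B_n$ one must handle the pair $\bb=-\theta=-\epsilon_1-\epsilon_2$, $\bb'=\alpha_1=\epsilon_1-\epsilon_2$: these are two orthogonal long roots whose $\BQ$-span meets $\Phi$ in a full $B_2$ (because $\pm\epsilon_1,\pm\epsilon_2$ are also roots), so they are not a basis of that subsystem and cannot be placed in circular order (see the case $B_n$ in \S\ref{sec:CO}). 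The reflections commute in $W_\af$, but the commutation of the chosen representatives is not automatic --- conjugation could a priori introduce a nontrivial torus element --- and neither Tits' theorem nor \cite[Proposition 6.1.8]{BT1} covers it; one needs a separate argument such as \cite[Lemma 3.1]{Gan15}. This is precisely the point recorded in the Remark following Corollary \ref{cor:Coxeter} as a correction to the proof of \cite[Lemma 5.1]{GH23}, so your proposal inherits the same gap: the statement survives, but the cited rank-two reduction does not prove it for the odd orthogonal factors.
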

In \cite{GH23}, it was also proved that the affine Weyl group (and hence also the Iwahori-Weyl group) of the even unitary group $U_6 \subset \Res_{L/F} \GL_6$ where $L/F$ is wildly ramified quadratic extension, does not admit a Tits group. 

In this article, we give a complete answer to the question of the existence of a Tits group of the affine Weyl group. Note that the affine Weyl group (which is the Iwahori-Weyl group of $G(F)$ when $G$ is semisimple and simply connected) is in fact a Coxeter group; let $\BS$ denote its Coxeter generating set whose elements are affine simple reflections. To explain the results in this article, let us quickly recall the definition of $\CT_\af$ from \cite{GH23}. The Tits group $\CT_\af$ (see Definition \ref{def:tits2}) of the affine Weyl group $W_\af$ is a certain subgroup of $G(F)$ that contains representatives $n_s$ of the elements $s \in \BS$, such that
\begin{itemize}
\item we have an exact sequence 
\[ 1 \rightarrow S_2 \rightarrow \CT_\af \rightarrow W_\af \rightarrow 1\] where $S_2$ is a certain elementary abelian 2-group, 
    \item the elements $n_s, s \in \BS$, satisfy Coxeter relations,
    \item the element $n_s^2 = b^\vee(-1)$ lies in $S_2$, where $s = s_a$ for a suitable affine root $a$ and $b$ is the gradient of $a$.
\end{itemize}

Let us summarize the results of this article.  Let $G$ be a connected, reductive group over $F$. Let $\bF$ denote the completion of the maximal unramified extension of $F$ contained in a fixed separable closure $F_s$. First, we consider the question of existence of the Tits group $\breve \CT_\af$ of the affine Weyl group $\bW_\af$ over $\bF$.  Let $\bBS$ be the set of affine simple reflections of $\bW_\af$ which forms a Coxeter generating set of $\bW_\af$. 
Let $G_\bF^{\Sc}$ denote the simply connected cover of the derived subgroup of $G_\bF$.  For a connected, reductive group $H$ over $F$, we say for brevity that $H_\bF$ is of even (resp. odd) unitary type if $H_\bF^\Sc \cong \Res_{L/\bF} \SU_n$ where $\SU_n$ is the special unitary group defined by a ramified quadratic extension $\tF/L$ and with $n$ even (resp. $n$ odd). Similarly, we say $H_\bF$ is of odd orthogonal type if $H_\bF^\Sc \cong \Res_{\tF/\bF} \mathrm{Spin}_{2n+1}$. The following theorem is proved in \S \ref{sec:TitsBF}.
\begin{theorem}\label{thm:intro}

    \begin{enumerate}
        \item If $G_\bF^\der$ does not contain a simple factor of unitary type, then $\breve \CT_\af$ exists.
        \item If $G_\bF = \U_{2r+1}, r \geq 1$, is an odd unitary group, there exist a set of representatives $n_\bs, \bs \in \bBS$ that satisfy Coxeter relations. However, $n_\bs^2$ is not necessarily of order 2 (and does not necessarily lie in $\breve S_2$). 
        \item If $G_\bF = \U_{2r}, r \geq 3$, is an even unitary group, there \textit{do not} exist a set of representatives of elements of $\bBS$ in $G(\bF)$ that satisfy all the Coxeter relations.
    \end{enumerate}
\end{theorem}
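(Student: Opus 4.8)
The plan is to argue by contradiction. Suppose representatives $n_0, n_1, \dots, n_r \in G(\bF)$ of the affine simple reflections $s_0, s_1, \dots, s_r \in \bBS$ exist and satisfy all the braid relations, and extract from the ramified structure of $G_\bF = \U_{2r}$ an invariant that these relations would force to be trivial but that can be shown to be non-trivial. First I would record the combinatorial input. For the quasi-split ramified even unitary group the affine Weyl group $\bW_\af$ is of type $\tC_r$, so $\bBS = \{s_0, \dots, s_r\}$ is the chain $s_0 \Leftarrow s_1 - s_2 - \cdots - s_{r-1} \Rightarrow s_r$, with $m(s_0,s_1) = m(s_{r-1},s_r) = 4$ and all other adjacent $m = 3$. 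The two extreme nodes $s_0, s_r$ are the reflections in the long (ramified) affine roots, whose root groups are the trace-zero lines $\tF^{\,\tr = 0} = \bF\cdot\pi$ with half-integral filtration jumps, where $\pi$ is a uniformizer of $\tF$; the interior nodes $s_1, \dots, s_{r-1}$ are reflections in short roots, whose rank-one subgroups are of $\SL_2$-type over $\bF$ with integral jumps. I would fix a maximal $\bF$-split torus $\bT$ compatible with this pinning and, for each $s_i = s_{a_i}$, record the gradient $b_i$ and coroot $b_i^\vee$.

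Next I would isolate the lift-independent content of the squares. Since $n_i$ normalizes $\bT$ and projects to the involution $s_i$, the square $n_i^2$ lies in $\bT(\bF)_1$ and is $s_i$-fixed; replacing $n_i$ by $t\,n_i$ with $t \in \bT(\bF)_1$ multiplies $n_i^2$ by the norm $t\, s_i(t)$. Hence each $s_i$ carries a canonical class $\kappa(s_i) \in \widehat H^0(\langle s_i\rangle, \bT(\bF)_1) = \bT(\bF)_1^{s_i}/\{t\, s_i(t) : t \in \bT(\bF)_1\}$, independent of the chosen lift. The braid relations couple these classes: for an $m = 3$ bond $s_i - s_{i+1}$, conjugation by $n_{i+1}$ carries $n_i$ to a lift of $s_{i+1}s_i s_{i+1}$, which identifies $\kappa(s_i)$ with the class attached to the reflection $s_{i+1}s_i s_{i+1}$ via the $\bW_\af$-action on $\bT(\bF)_1$; iterating along the interior chain relates all of $\kappa(s_1), \dots, \kappa(s_{r-1})$, and the two $m = 4$ end relations relate $\kappa(s_0)$ to $\kappa(s_1)$ and $\kappa(s_{r-1})$ to $\kappa(s_r)$ through a computation in the rank-two ramified unitary parahoric at each end.

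The contradiction is produced by comparing the class $\kappa(s_0)$ (equivalently $\kappa(s_r)$), transported through the diagram, against its intrinsic value dictated by the half-integral, ramified root-group filtration. Concretely I would work inside the parahoric whose reductive quotient is the split group of type $C_3$ attached to the end triple $\{s_0, s_1, s_2\}$ — this is exactly the configuration forcing $r \ge 3$, since for $r = 2$ the neighbour $s_1$ double-bonds directly to the other ramified end, so $\{s_0,s_1,s_2\} = \bBS$ generates the whole affine $\tC_2$ rather than a finite $C_3$. Because the residue field of $\bF$ is algebraically closed, each parahoric quotient is split, so within it Tits' theorem already furnishes braid-compatible representatives; the obstruction is therefore not internal to any single parahoric but lies in gluing the lifts across the half-integral levels of the two long ends. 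I would compute the defect of the end $m = 4$ relation using the Chevalley–Steinberg commutation relations for the quasi-split unitary group (as in \cite{GH23}) and show that it contributes a non-trivial ramified element of $\bT(\bF)_1$ which the transport from the interior (integral) nodes cannot absorb; assembling the contributions of the two ends yields a class that no re-choice $n_i \mapsto t_i n_i$ can kill, contradicting the assumed braid relations.

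The main obstacle is this last explicit computation at the ramified ends: one must evaluate the $m = 4$ braid defect in the rank-two ramified unitary parahoric precisely enough to see the half-integral (ramified) contribution survive, and do so uniformly in the residue characteristic, including the wildly ramified case where $\tF/\bF$ is not of the form $\bF(\sqrt{\varpi})$; there I would avoid explicit square roots and instead track the invariant through the valuation (Kottwitz) homomorphism of the ramified torus. A useful internal check is that the same invariant must vanish in the two situations of parts (1) and (2) — when $G_\bF^\der$ has no unitary factor (everything splits over $\bF$) and for the odd unitary groups, where a braid-compatible family exists — which also clarifies why the phenomenon is special to even $\U_{2r}$ with $r \ge 3$.
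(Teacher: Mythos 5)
There is a genuine gap, and it starts with the combinatorial input. You assert that for the quasi-split ramified $\U_{2r}$ the affine diagram is the $\tC_r$ chain with $m(s_0,s_1)=4$ at the ramified end. That is the local Dynkin diagram of the ramified \emph{odd} unitary group (type $C$-$BC_r$ in Bruhat--Tits' table), where braid-compatible representatives \emph{do} exist (part (2) of the theorem). For the ramified even unitary group the diagram is of type $B$-$C_r$ (twisted $A_{2r-1}^{(2)}$): with the alcove bounded by $\ba_0=-\epsilon_1-\epsilon_2+\tfrac12$, $\ba_1=\epsilon_1-\epsilon_2,\dots,\ba_{r-1}=\epsilon_{r-1}-\epsilon_r$, $\ba_r=2\epsilon_r$, the gradients of $\ba_0$ and $\ba_1$ are orthogonal, so $m(s_0,s_1)=2$, while $m(s_0,s_2)=m(s_1,s_2)=3$; the double bond sits at the \emph{other} end. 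Consequently the computation you propose --- evaluating an ``$m=4$ braid defect in the rank-two ramified unitary parahoric at the $s_0$ end'' --- targets relations that are not present, and if carried out in the configuration you describe it would find no obstruction (consistent with the odd case). The paper's actual contradiction (Proposition \ref{prop:eug}) comes from playing the commutation $m(\bs_0)m(\bs_1)=m(\bs_1)m(\bs_0)$ against the two order-$3$ relations $m(\bs_0)m(\bs_2)m(\bs_0)=m(\bs_2)m(\bs_0)m(\bs_2)$ and $m(\bs_1)m(\bs_2)m(\bs_1)=m(\bs_2)m(\bs_1)m(\bs_2)$: writing every candidate lift as an explicit diagonal matrix times a permutation matrix, these three relations force $\tau(d_{11}\varpi_\tF)=d_{11}\varpi_\tF$ for a unit $d_{11}$, i.e.\ a $\tau$-fixed uniformizer of the ramified quadratic extension $\tF/\bF$, which is impossible. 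This also explains the hypothesis $r\ge 3$: one needs $\ba_2=\epsilon_2-\epsilon_3$ to be a short simple root away from the double-bond end, which fails for $r=2$ --- not the reason you give. Your abstract framing via lift-independent classes in $\widehat H^0(\langle s_i\rangle,\bT(\bF)_1)$ transported along braid relations is a reasonable repackaging of exactly this diagonal-matrix bookkeeping, but it is not executed, and the entire content lies in the explicit evaluation you defer.

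Separately, the proposal does not prove parts (1) and (2) at all. The closing remark that your invariant ``must vanish'' in those cases is a consistency check, not an argument: part (1) requires actually constructing representatives satisfying the Coxeter relations (via a Chevalley--Steinberg system, the circular-ordering case analysis feeding \cite[Proposition 6.1.8]{BT1}, and a separate argument for the odd orthogonal type where circular ordering also fails), together with the computation $n_{s_\ba}^2=\bb^\vee(-1)\in\breve S_2$; part (2) requires the square computation $n_{s_\ba}^2=\Nm_{\bF_{\bb_*}/\bF}\tb_*^\vee\bigl(\gamma_0(\varpi)\varpi^{-1}\bigr)$ showing the failure of condition 2(a). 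None of this is addressed.
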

Let us briefly explain the idea of the proof. Note that by Steinberg's Theorem (see \cite[Theorem 56]{Ste65}), $G_\bF$ is quasi-split. It admits a Chevalley-Steinberg system and this can be used to construct representatives of elements of $\bBS$ (see \S \ref{sec:repsASR}), whose squares can be calculated explicitly (see \S \ref{sec:squares}). To show that these representatives of elements of $\bBS$ satisfy Coxeter relations, we invoke \cite[Proposition 6.1.8]{BT1}. We show that the hypotheses of \textit{loc.cit.} are satisfied in all but two cases; one when $G_\bF^\der$ contains a simple factor of odd orthogonal type, and other when $G_\bF^\der$ contains a simple factor of even unitary type. If $G_\bF^\der$ does not contain a simple factor of these two types, \cite[Proposition 6.1.8]{BT1} yields the proof of (1) of Theorem \ref{thm:intro}. If $G_\bF^\der$ contains a simple factor of odd orthogonal type, we use \cite[Lemma 3.1]{Gan15} (for this simple factor) to prove (1) of Theorem \ref{thm:intro} (see Corollary \ref{cor:Coxeter} and Theorem \ref{thm:TitsbF}).  Part (2) of Theorem \ref{thm:intro} follows from a calculation carried out in Section \ref{sec:squares}. Let us say a few words about Theorem \ref{thm:intro} (3). As mentioned above, \cite[Proposition 6.1.8]{BT1} cannot be applied to the case when $G$ is a ramified even unitary group because a relevant rank 2 finite root system does not satisfy the hypotheses of the proposition in \textit{loc.cit}. We show in Proposition \ref{prop:eug} that for the unitary group $\U_{2r} \subset \Res_{\tF/\bF} \GL_{2r}, r \geq 3$, there do not exist representatives of the affine simple reflections that satisfy Coxeter relations. 

Next, we descend the construction in Theorem \ref{thm:intro} (1) to $F$.  Let $\sigma$ denote the Frobenius action on $G(\bF)$ such that $G(F) = G(\bF)^\sigma$.  Note that $W_\af = \bW_\af^\sigma$. The following result is proved in \S \ref{sec:descend}.

\begin{proposition}\label{prop:intro}
    Let $G$ be a connected, reductive group over $F$ such that $G_\bF^\der$ does not contain a simple factor of unitary type. Then there exists a $\sigma$-stable set of representatives of elements of $\bBS$ such that the subgroup $\breve\CT_\af$ of $G(\bF)$ generated by these representatives is a Tits group of $\bW_\af$ over $\bF$. Further, the group $\CT_\af :=\breve\CT_\af^\sigma$ is a Tits group of $W_\af$ over $F$.
\end{proposition}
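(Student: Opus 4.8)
The plan is to promote the construction behind Theorem~\ref{thm:intro}(1) to a $\sigma$-equivariant one and then pass to $\sigma$-fixed points, exploiting the fact that $W_\af = \bW_\af^\sigma$ is itself a Coxeter group obtained from $(\bW_\af, \bBS)$ by folding along the Frobenius.

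First I would fix the Chevalley-Steinberg system of \S\ref{sec:repsASR} to be the one attached to the $F$-structure of $G$. Since $G$ is defined over $F$ and $G_\bF$ is quasi-split (Steinberg), such a system is Galois-equivariant by construction, hence $\sigma$-stable: $\sigma$ permutes the pinning data according to its action on affine roots. With this choice the representatives $n_\bs$ of \S\ref{sec:repsASR} transform by $\sigma(n_\bs) = n_{\sigma(\bs)}$ for every $\bs \in \bBS$, so the subgroup $\breve\CT_\af$ they generate and its elementary abelian $2$-group $\breve S_2$ are both $\sigma$-stable; moreover, imposing $\sigma$-stability does not disturb the Tits-group axioms, so $\breve\CT_\af$ is still a Tits group of $\bW_\af$ over $\bF$ by Theorem~\ref{thm:intro}(1). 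This is the step I expect to require the most care: one must verify that the normalization of the pinning vectors is compatible with the $\ast$-action simultaneously at all affine nodes, so that no torus-valued discrepancy appears between $\sigma(n_\bs)$ and $n_{\sigma(\bs)}$. This is exactly where the hypothesis that $G$ is defined over $F$ (rather than merely its base change to $\bF$) is used.

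Next I would invoke the standard description of the fixed-point Coxeter group. Writing $O$ for the $\sigma$-orbits on $\bBS$, each standard parabolic $\bW_O$ is finite and $W_\af = \bW_\af^\sigma$ is the Coxeter group with generating set $\{w_O\}$, where $w_O$ denotes the longest element of $\bW_O$. Using the Tits section $\bw \mapsto n_\bw$ afforded by $\breve\CT_\af$, set $n_O := n_{w_O}$. Since $\sigma$ fixes $w_O$, permutes the $n_\bs$ within $O$, and $n_{w_O}$ is independent of the chosen reduced word, each $n_O$ is $\sigma$-fixed. The key structural input is that any reduced expression of $w \in W_\af$ in the generators $\{w_O\}$ remains length-additive in $\bW_\af$; combined with the length-additivity of the Tits section over $\bF$, this gives $n_{w_{O_1}}\cdots n_{w_{O_k}} = n_{w_{O_1}\cdots w_{O_k}}$, whence the $n_O$ satisfy the braid relations of the Coxeter system $(W_\af, \{w_O\})$.

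Finally I would assemble the Tits-group axioms of Definition~\ref{def:tits2}. Intersecting the exact sequence
\[ 1 \to \breve S_2 \to \breve\CT_\af \to \bW_\af \to 1 \]
with $\sigma$-fixed points is left exact, and surjectivity of $\breve\CT_\af^\sigma \to \bW_\af^\sigma = W_\af$ follows from the previous paragraph: the $\sigma$-fixed lifts $n_O$ of the generators $w_O$ already generate the image, bypassing the potential $H^1(\langle\sigma\rangle, \breve S_2)$ obstruction. Setting $S_2 := \breve S_2^\sigma$, which is again elementary abelian of exponent $2$, a short argument writing any $x \in \breve\CT_\af^\sigma$ as a word in the $n_O$ times an element of $\breve S_2 \cap \breve\CT_\af^\sigma = S_2$ shows $\CT_\af := \breve\CT_\af^\sigma = \langle n_O\rangle\cdot S_2$, so we obtain the exact sequence $1 \to S_2 \to \CT_\af \to W_\af \to 1$ together with the Coxeter relations. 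It remains to identify $n_O^2$: since $w_O$ is an involution, $n_O^2 \in \breve S_2^\sigma = S_2$, and one checks it equals $b^\vee(-1)$ for the gradient $b$ of the relative affine root attached to $w_O$ by the explicit computation with the squares and braid relations of the $n_\bs$ recorded in \S\ref{sec:squares} (note that for an orbit $O$ spanning a rank-$2$ parabolic, $n_O = n_i n_j n_i$ and the value is obtained by the usual Tits conjugation identity, so no new input beyond \S\ref{sec:squares} is needed). This exhibits $\CT_\af$ as a Tits group of $W_\af$ over $F$.
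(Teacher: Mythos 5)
There is a genuine gap in your first step, and it is exactly the step the paper has to work hardest on. You assert that because $G$ is defined over $F$ and $G_\bF$ is quasi-split, the Chevalley--Steinberg system "attached to the $F$-structure" is Galois-equivariant, so that $\sigma(n_\bs)=n_{\sigma(\bs)}$ automatically. But a Chevalley--Steinberg system is a structure on a \emph{quasi-split} group, and only $G_\bF$ is quasi-split; $G$ itself may be a non-quasi-split inner form over $F$ (this is the generic situation the proposition covers). In that case $\sigma$ differs from the quasi-split Frobenius by an inner twist, and a pinning equivariant for $\Gal(\tF/\bF)$ has no reason to satisfy $\sigma(x_\ba(1))=x_{\sigma(\ba)}(1)$: one only gets $\sigma(x_\ba(1))=x_{\sigma(\ba)}(c)$ for some constant $c$, and hence a torus-valued discrepancy between $\sigma(n_\bs)$ and $n_{\sigma(\bs)}$ --- precisely the discrepancy you acknowledge must be ruled out but do not rule out. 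The same issue affects the elements $n_{\bb^\vee}=\Nm_{\tF/\bF}\tb_*^\vee(\varpi_\tF)$ entering the representatives for $\ba\in\breve\Delta\setminus\breve\Delta_0$, which depend on a choice of uniformizer.

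The paper closes this gap (Proposition \ref{prop:descent}, case (I), following \cite[Proposition 6.1]{GH23}) by an argument you would need to supply: for each $\sigma$-orbit $\CX$ of size $k$ one produces a unit $u\in\fkO_{\bF_\tb}^\times$ with $\sigma^k(x_\ba(u))=x_\ba(u)$, using \cite[Proposition 5.1.19]{BT2} to see that the jump $U_b(F)_{v,r}\neq U_b(F)_{v,r+}$ of the filtration over $F$ forces the $\sigma^k$-fixed points of $U_\bb(\bF)_{v,r}/U_\bb(\bF)_{v,r+}$ to be nonzero; rescaling the pinning by $u$ (which also absorbs the uniformizer entering $n_{s_\ba}$ for the affine node) then yields a $\sigma$-stable family of representatives. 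Your remaining two paragraphs --- forming $n_{w_\CX}$ for the longest elements of the finite orbit-parabolics, using Richarz's length compatibility to get the Coxeter relations of $(W_\af,\BS)$, and taking $\sigma$-fixed points of the exact sequence with surjectivity supplied by the explicit $\sigma$-fixed lifts --- do match the paper's Corollary \ref{cor:TitsF} and are sound once the $\sigma$-stable family over $\bF$ is actually in hand.
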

The proof of construction of representatives of elements of $\bBS$ over $\bF$ that are $\sigma$-stable and satisfy Coxeter relations is a simple generalization of \cite[Proposition 6.1]{GH23}.  This then yields a set of representatives of elements of $\BS$ over $F$ that satisfy Coxeter relations This result, combined with Theorem \ref{thm:intro}, proves Proposition \ref{prop:intro} (see Proposition \ref{prop:descent} and Corollary \ref{cor:TitsF}). 

Finally, we make some remarks about the existence of Tits groups for the case unitary groups.  Let $G = \U_{2r+1}, r \geq 1$ be an odd unitary group over $F$ associated to a ramified quadratic extension $L/F$. Then, a  Tits group of its affine Weyl group exists if and only if $L/F$ is tamely ramified (see Remark \ref{rem:tameodd} and \S \ref{rem:Titsodd1}). Let $G = \U_{2r}, r \geq 3$, be a even unitary group over $F$ associated to a ramified quadratic extension $L/F$. We know that the affine Weyl group of $G$ does not admit a Tits group.  Let $G^*$ be the unique non-quasi-split inner form of $G$. Then, a Tits group of the affine Weyl group of $G^*$ always exists (see \S \ref{rem:Titseven}).

\section*{Acknowledgements}
I thank Xuhua He for the numerous discussions during our collaboration in \cite{GH23} that led up to this work and for his encouragement. I thank Dipendra Prasad for the helpful comments and suggestions on a previous draft of this article. I am grateful to the referee for the careful reading and for the many helpful comments. 

\section{Preliminaries}

\subsection{Notation}\label{notation} Let $F$ be a non-archimedean local field with $\fkO_F$ its ring of integers, $\fkp_F$ its maximal ideal, $\varpi_F$ a uniformizer, and $\kk=\BF_q$ its residue field. Let $p$ be the characteristic of $\kk$. Let $\bar F$ be the completion of a separable closure $F_s$ of $F$. Let $\bF$ be the completion of the maximal unramified subextension with valuation ring $\fkO_\bF$ and residue field $\bar\kk$. Note that $\varpi_F$ is also a uniformizer of $\bF$. Let $\Gamma = \Gal(\bar F/F)$ and $\Gamma_0=\Gal(\bar F/\bF)$. Let $val$ denote the additive valuation on $F$ normalized so that $val(F) = \BZ$.  

Let $G$ be a connected, reductive group over $F$. By Steinberg's Theorem (see \cite[Theorem 56]{Ste65}), $G_\bF$ is quasi-split. Let $\sigma$ denote the Frobenius action on $G(\bF)$ such that $G(F) = G(\bF)^\sigma$. Let $A$ be a maximal $F$-split torus of $G$ and $S$ be a maximal $\bF$-split $F$-torus of $G$ containing $A$. Let $T = Z_G(S)$. Then $T$ is defined over $F$ and is a maximal $F$-torus of $G$ containing $S$. Let $\tF$ be the field of invariants of the kernel of the representation of $\Gamma_0$ on $X^*(T)$. This extension is Galois over $\bF$. Hence $T$ and $G$ are split over $\tF$.  

Let $\tilde\Phi(G,T)$ be the set of roots of $T_\tF$ in $G_\tF$. Then the set of relative roots of $S$ in $G_\bF$, denoted by $\breve\Phi(G,S)$, is the set of the restrictions of the elements in $\tilde\Phi(G,T)$ to $S$. Let $\bW_0$ denote the relative Weyl group of $G$ with respect to $S$ and let $W(G,T)$ denote the absolute Weyl group of $G$.

Let $\CB(G, \bF)$ (resp. $\CB(G,F)$) denote the enlarged Bruhat-Tits building of $G(\bF)$ (resp. $G(F)$). Then $\CB(G, \bF)$ carries an action of $\sigma$ and $\CB(G,F)= \CB(G, \bF)^\sigma$. Let $\CA(S, \bF)$ be the apartment in $\CB(G, \bF)$ corresponding to $S$. Let $\bal$ be a $\sigma$-stable alcove in $\CA(S,\bF)$.  Set $\al = \bal^\sigma$; this is an alcove in the apartment $\CA(A, F)$ (see \cite[\S 5.1]{BT2}). 

Let $\bv_0$ be an extra special vertex contained in the closure of $\bal$ (see \cite[Definition 1.3.39 and Proposition 1.3.43]{KP23}). Let $\breve\Phi_\af(G,S)$ denote the set of affine roots of $G(\bF)$ relative to $S$. Let $V = X_*(S) \otimes_\BZ \BR$. The choice of $\bv_0$ also allows us to identify $\CA(S, \bF)$ with $V$ via $\bv_0 \mapsto 0 \in V$, which we now do. We then view $\bal \subset V$. Let $\breve\Delta \subset \breve\Phi_\af(G,S)$ be the set of affine roots such that the corresponding vanishing hyperplanes form the walls of $\bal$. The Weyl chamber in $V$ that contains $\bal$ then yields a set of simple roots for $\breve\Phi(G,S)$ which we denote as $\breve\Delta_0$. Clearly $\breve\Delta_0 \subset \breve\Delta$.

\subsection{Affine Weyl group over $\breve F$}

Let $\bI$ be the Iwahori subgroup associated to $\bal$. Let $\pr:X_*(T) \rightarrow X_*(T)_{\Gamma_0}$ be the natural projection and let  $\kappa_{T, \bF}: T(\bF) \rightarrow X_*(T)_{\Gamma_0}$ denote the Kottwitz homomorphism (see \cite[\S 7.2]{Kot97}). The map $\kappa_{T, \bF}$ is surjective  and its kernel $T(\bF)_1$ is the unique parahoric subgroup of $T(\bF)$. Let $\bW = N_G(S)(\bF)/T(\bF)_1$ be the Iwahori-Weyl group of $G(\bF)$. This group fits into an exact sequence
\[1 \rightarrow X_*(T)_{\Gamma_0} \rightarrow \bW \rightarrow \bW_0 \rightarrow 1.\]

Recall that we have chosen an extra special vertex $\bv_0$. With this, we have a semi-direct product decomposition
\begin{align}\label{decomp1}
    \bW\cong X_*(T)_{\Gamma_0} \rtimes \bW_0.
\end{align}

Let $\bBS = \{s_\ba\;|\; \ba \in \breve\Delta\}$ be the set of simple reflections with respect to the walls of $\bal$. Let $\bBS_0 =\{s_\ba\;|\; \ba \in \breve\Delta_0\}$. Let $\bW_{\af} \subset \bW$ be the Coxeter group generated by $\bBS$. Let $T_{\Sc}, N_{\Sc}$ denote the inverse images of $T\cap G_\der$, resp. $N_
G(S)\cap G_\der$ in $G_\Sc$. Let $S_{\Sc}$ denote the $\bF$-split component of $T_{\Sc}$. Then $\bW_{\af}$ may be identified with the Iwahori-Weyl group of $G_{\Sc}$. It fits into the exact sequence
\begin{align}\label{IWGSDP}
    1 \rightarrow \bW_{\af} \rightarrow \bW \rightarrow X^*(Z(\hat G)^{\Gamma_0}) \rightarrow 1.
\end{align}
Let $\Omega_{\bal}$ be the stabilizer of $\bal$ in $\bW$. Then $\Omega_{\bal}$ maps isomorphically to $X^*(Z(\hat G)^{\Gamma_0})$ and we have a $\sigma$-equivariant semi-direct product decomposition
\[\bW \cong \bW_{\af} \rtimes \Omega_\bal.\]

Let $\bl$ be the length function on $\bW$. Then $\bl(s) = 1$ for all $s \in \bBS$ and $\Omega_\bal$ is the set of elements of length 0 in $\bW$.

\subsection{Affine Weyl group over $F$}\label{sec:WF}
Let $I$ be the Iwahori subgroup of $G(F)$ associated to $\al$. Then $I = \bI^\sigma$. Let $M = Z_G(A)$ and $M(F)_1$ be the unique parahoric subgroup of $M(F)$. We may identify $M(F)_1$  with the kernel of the Kottwitz homomorphism $M(F) \rightarrow X^*(Z(\hat M)^{\Gamma_0})^\sigma$. Let $W= N_G(A)(F)/M(F)_1$ denote the Iwahori-Weyl group of $G(F)$ with length function $l$. 

By \cite[Lemma 1.6]{Ri}, we have a natural isomorphism $W \cong \breve W \,^{\s}$. It is proved in \cite[Proposition 1.11 \& sublemma 1.12]{Ri} that 

(a) for $w, w' \in W$, $\breve \ell(w w')=\breve \ell(w)+\breve \ell(w')$ if and only if $\ell(w w')=\ell(w)+\ell(w')$.

The semi-direct product decomposition of $\bW$ in \eqref{IWGSDP} is $\sigma$-equivariant and yields a decomposition
\[W \cong \bW_{\af}^\sigma \rtimes \Omega_{\bal}^\sigma.\]
Let $W_{\af} = \bW_{\af}^\sigma$ and let $\BS$ be the set of reflections through the walls of $\al$. Then $(W_\af, \BS)$ is a Coxeter system. The group $\Omega_\al$, which is the stabilizer of the alcove $\al$, is isomorphic to $\Omega_{\bal}^\sigma$ and is the set of length 0 elements is $W$. 

The simple reflections $\BS$ of $W_{\af}$ are certain elements in $\bW_{\af}$. The explicit description is as follows. For any $\s$-orbit $\CX$ of $\bBS$, we denote by $\bW_{\CX}$ the parabolic subgroup of $\bW_{\af}$ generated by the simple reflections in $\CX$. If moreover, $\bW_{\CX}$ is finite, we denote by $\bw_{\CX}$ the longest element in $\bW_{\CX}$. It is proved by Lusztig \cite[Theorem A.8]{Lusz} that there exists a natural bijection $s \mapsto \CX$ from $\BS$ to the set of $\s$-orbits of $\bBS$ with $\bW_{\CX}$ finite such that the element $s \in W_{\af} \subset \bW_{\af}$ is equal to $\bw_{\CX}$. 

Let $\Delta \subset \Phi_\af(G,A)$ be the set of affine roots such that the corresponding vanishing hyperplanes form the walls of $\al$.

\section{Definition of a Tits group of the affine Weyl group}\label{tits}

\subsection{A Tits group of the affine Weyl group over $\breve F$}\label{sec:tits} The notion of a  Tits group of the affine and Iwahori-Weyl group was  studied in \cite{GH23}. We recall this definition for affine Weyl groups.

Note that any element $\bw \in \bW_\af$ can be written as $\bw=\bs_{i_1} \cdots \bs_{i_n}$, where $\bs_{i_1}, \cdots, \bs_{i_n} \in \breve \BS$. If $n=\breve \ell(\bw)$, then we say that $\bw=\bs_{i_1} \cdots \bs_{i_n} $ is a reduced expression of $\bw$ in $\bW_\af$.

\begin{definition}\label{def:tits}
Let $\breve S_2$ be the elementary abelian two-group generated by $\breve b^\vee(-1)$ for $\breve b \in \breve \Phi(G, S)$. A {\it Tits group} of $\bW_\af$ is a subgroup $\breve \CT_\af$ of $N_G(S)(\breve F)$ such that 
\begin{enumerate}
    \item The natural projection $\breve \phi: N_G(S)(\breve F) \to \bW_\af$ induces a short exact sequence $$1 \to \breve S_2 \to \breve \CT_\af \xrightarrow{\breve \phi} \bW_\af \to 1.$$
    
    \item There exists a {\it Tits cross-section} $\{m(\bw)\}_{\bw \in \bW}$ of $\bW$ in $\breve \CT$ such that 
    
    \begin{enumerate}
        \item for $\ba \in \breve\Delta$, $m(\bs_{\ba})^2 = \bb^\vee(-1)$, where $\bb$ is the gradient of $\ba$.
        
        \item for any reduced expression $\bw=\bs_{i_1} \cdots \bs_{i_n} $ in $\bW_\af$, we have $m(\bw)=m(\bs_{i_1}) \cdots m(\bs_{i_n})$. 
    \end{enumerate}
\end{enumerate}
\end{definition}

It is easy to see that the condition (2) (b) in Definition \ref{def:tits} is equivalent to 

Condition (2)(b)$^{\dagger}$: $m(\bw \bw')=m(\bw) m(\bw')$ for any $\bw, \bw' \in \bW_{\af}$ with $\bl(\bw\bw') = \bl(\bw) +\bl(\bw')$.

\subsection{A Tits group of the affine Weyl group over $F$}\label{sec:TitsF}
The Tits group of $W_\af$ is defined as follows.  

Note that any element $w \in W_\af$ can be written as $w=s_{i_1} \cdots s_{i_n}$, where $s_{i_1}, \cdots, s_{i_n} \in \BS$. If $n=\ell(w)$, then we say that $w=s_{i_1} \cdots s_{i_n} $ is a reduced expression of $w$ in $W_\af$. 

\begin{definition}\label{def:tits2}
Let $S_2=\breve S_2^\s$. A {\it Tits group} of $W_\af$ is a subgroup $\CT_\af$ of $N_G(A)(F)$ such that 
\begin{enumerate}
    \item The natural projection $\phi: N_G(A)(F) \to W_\af$ induces a short exact sequence $$1 \to S_2 \to \CT_\af \xrightarrow{\phi} W_\af\to 1.$$
    
    \item There exists a {\it Tits cross-section} $\{m(w)\}_{w \in W_\af}$ of $W_\af$ in $\CT_\af$ such that 
    
    \begin{enumerate}
       \item for $a \in \Delta$, $m(s_{a})^2 = b^\vee(-1)$, where $b$ is the gradient of $a$.
        
       \item for any reduced expression $w=s_{i_1} \cdots s_{i_n}$ in $W_\af$, we have $m(w)=m(s_{i_1}) \cdots m(s_{i_n})$. 
    \end{enumerate}
\end{enumerate}
\end{definition}

\section{A Tits group of the affine Weyl group over $\bF$}\label{sec:TitsBF}

\subsection{A set of representatives for affine simple reflections}\label{sec:repsASR}
Let $G$ be a connected, reductive group over $F$. In \cite[\S 5.1]{GH23}, we proved the existence of the Tits group of a finite relative Weyl group of $G_\bF$. We briefly recall some definitions here.

\subsubsection{}\label{Sec:Affpin} Recall that we have chosen an extra special vertex $\breve v_0 \in \CA(S, \bF)$ and we have $\breve\Delta_0 \subset \breve \Delta$.  This determines a Borel subgroup of $G$ defined over $\bF$. Let $\tilde\Delta_0$ be the set of simple roots of $\tilde\Phi(G,T)$ such that  their restrictions to $S$ lie in $\breve\Delta_0$ provided the restriction is non-zero.  We consider a Steinberg pinning $(x_\ta)_{\ta \in \tilde\Delta_0}$ of $G_\tF$ relative to $S$ (see \cite[\S 4.1.3]{BT2}), which extends to a Chevalley-Steinberg system  $x_\ta: \BG_a \xrightarrow{\cong} U_\ta$ for all $\ta \in \tilde\Phi(G,T)$ and is compatible with the action of $\Gal(\tF/\bF)$. The valuation attached to this Chevalley-Steinberg system determines a point in $\CA(S,\bF)$, which we require is $\breve v_0$.  

From this, we get a set of pinnings for $\ba \in \breve\Phi(G,S)$, which we briefly recall. Let $U_\ba$ be the root subgroup of the root $\ba$. Let $\tilde a \in \tilde\Phi(G,T)$ be such that $\tilde a|_S = \ba$. Let $\bF_\ta$ be the subfield of $\tF$ corresponding to the stabilizer of $\ta$ in $\Gal(\tF/\bF)$.  When $2\ba$ is not a root, we have $x_\ba: \Res_{\bF_{\ta}/\bF}\BG_a \xrightarrow{\cong}   U_\ba$. 
If $2\ba$ is a root, let $\ta, \ta' \in \tilde\Phi(G,T)$ such that $\ta, \ta'|_S =\ba$ and $\ta+\ta'$ is a root. let $H_0(\bF_\ta, \bF_{\ta+\ta'}) = \{(u,v) \in \bF_\ta \times \bF_\ta\;|\; u \cdot \gamma_0(u) = v+\gamma_0(v)\}$, where $\gamma_0$ is the non-trivial $\bF_{\ta+\ta'}$-automorphism of $\bF_\ta$. We have $$x_\ba:  \Res_{\bF_{\ta+\ta'}/\bF} H_0(\bF_\ta, \bF_{\ta+\ta'})\xrightarrow \cong U_\ba.$$

For any $\ta \in \tilde\Delta_0$, let 
\begin{align}\label{tilderep}
n_{s_\ta}:=x_\ta(1) x_{-\ta}(1) x_{\ta}(1).
\end{align}
Here we use the convention of \cite[\S 3.2.1 and \S 4.1.5]{BT2}, and $n_{s_\ta} \in N_G(S)(\bF)$ (This is different from the convention used in \cite{Spr09} where $n_{s_\ta}:=x_\ta(1) x_{-\ta}(-1) x_{\ta}(1)$).

\subsubsection{} Let $\ba \in \breve \D_0$ be such that $2\ba$ is not a root. Set
\begin{align}\label{Rep1}
n_{s_\ba} := x_\ba(1)x_{-\ba}(1)x_\ba(1).
\end{align}

 Let $\ba \in \breve \D_0$ be such that $2\ba$ is a root. By \cite[ Chapter V, \S 4,  Proposition 7]{Ser79}, there exists $c \in \bF_\ba$ such that $c \gamma_0(c)=2$. Set \begin{equation}\label{Rep2'}
    n_{s_\ba} = x_\ba(c,1) x_{-\ba}(c,1)x_\ba(c,1).
\end{equation}

By \cite[\S 4.1.11]{BT2}, we have 
\begin{align}\label{Rep2}
n_{s_\ba}= \prod n_{s_\ta}^{-1} n_{s_{\ta'}}n_{s_{\ta}}^{-1}.
\end{align}
where the product is indexed by the family of sets $\{\ta, \ta'\}$ with $\ta, \ta' \in \tilde\Phi(G,T)$ such that $\ta+\ta'$ is a root and $\ta|_S =\ta'|_S = \ba$.

A direct calculation shows that 
\begin{align}\label{eq:rfsr2}
    n_{s_\ba}^2 =\begin{cases} \ba^\vee(-1), & \text{ if $2\ba$ is not a root}; \\ 1, & \text{ if $2\ba$ is a root}.\end{cases}
\end{align}
Note that if $2\ba$ is a root, then $\ba^\vee/2$ is a coroot, and $\ba^\vee(-1) = (\frac{\ba^\vee}{2}(-1))^2 = 1$, so the formula above simply says $n_{s_\ba}^2 = \ba^\vee(-1)$ whether or not $2\ba$ is a root.
We introduce the following notation. For a root $\bb \in \Phi(G,S)$, define
\begin{align*}
    \bb_* = \begin{cases}
        \frac{\bb}{2} & \text{ if $\frac{\bb}{2}$ is a root}\\[5pt]
        \bb & \text{ otherwise. }
    \end{cases}
\end{align*}
\subsubsection{}\label{sec:rasr}  Let $\ba \in \breve\Delta \backslash \breve\Delta_0$ and let $\bb \in \breve\Phi(G,S)$ be the gradient of $\ba$. Note that ${s_\bb} = s_{\bb_*}$. Define $n_{s_\bb}$ using the morphisms $x_{\pm\bb_*}$ as above. Note that $\bb^\vee, \bb_*^\vee  \in X_*(T_{\Sc})_{\Gamma_0} \hookrightarrow X_*(T)_{\Gamma_0}$. Let $\tb_* \in X^*(T_{\Sc})$ be such that $\tb_*|_S = \bb_*$. Then its dual $\tb_*^\vee \in X_*(T_{\Sc})$ and  $\pr(\tb_*^\vee) = \bb^\vee$. Let  $\varpi_\tF$ be a uniformizer of $\tF$. Define $n_{\bb^\vee} = \Nm_{\tF/\bF} \tb_*^\vee(\varpi_\tF)$. Then $n_{\bb^\vee} \in T_\Sc(\bF)$. Note that  $n_{\bb^\vee}= \Nm_{\bF_{\tb_*}/\bF} \tb_*^\vee(\varpi_{\bF_{\tb_*}})$  where $\varpi_{\bF_{\tb_*}} = \Nm_{\tF/\bF_{\tb_*}} \varpi_\tF $. Define \begin{align}\label{def:affinerep}
    n_{s_\ba} = n_{\bb^\vee} \cdot  n_{s_{\bb}} 
\end{align}

\subsection{Some lemmata on Coxeter relations} The starting point for finding representatives of affine simple reflections that satisfy Coxeter relations is the following proposition  of Bruhat-Tits.

By \cite[\S 6.1.3]{BT1}, for any $u \in U_{-\bb}(\bF)\backslash\{1\}$ and $u' \in U_{-\bb'}(\bF)\backslash\{1\}$, there exists unique triples $(u_1, m(u), u_2) \in U_{\bb}(\bF) \times N_G(S)(\bF) \times U_{\bb}(\bF)$ and $(u'_1, m(u'), u'_2) \in U_{\bb'}(\bF) \times N_G(S)(\bF) \times U_{\bb'}(\bF)$ such that $u = u_1m(u)u_2$ and $u'=u'_1 m(u') u'_2$.

\begin{proposition}[Proposition 6.1.8 of \cite{BT1}]\label{Prop:BT1Cox} Suppose $\Phi$ is a rank 2 root system. Let $\bb^{(1)}, \cdots, \bb^{(2k)}$ be elements of $\Phi^{\red}$ arranged in circular order, that is, with the property that
\begin{align}
    \Phi^\red \cap (\BQ_+ b^{(i-1)}+\BQ_+ b^{(i+1)})=\{b^{(i-1)}, b^{(i)},  b^{(i+1)}\},\; 1<i<2k.
\end{align}
Then, the elements $b = b^{(1)}$ and $b'= b^{(k)}$ form a basis for $\Phi$. Let $u \in U_b$ and $u' \in U_{b'}$ and let $m = m(u)$ and $m' = m(u')^{-1}$. Then
\[mm'\cdots = m'm \cdots\]
where each side of the expression has $k$-factors. 
    \end{proposition}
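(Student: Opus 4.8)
The plan is to prove the braid relation directly, by realizing both sides as lifts of the longest element of the rank-two Weyl group and showing that these two lifts coincide. Write $W_\Phi = \langle s_b, s_{b'}\rangle$ for the Weyl group of $\Phi$; since $\Phi^\red$ has $2k$ elements, $W_\Phi$ is dihedral of order $2k$, and its longest element $w_0$ has the two reduced expressions $s_b s_{b'} \cdots$ and $s_{b'} s_b \cdots$, each with $k$ factors. First I would recall (from \S 6.1.3 of \cite{BT1}) that $m = m(u)$ and $m' = m(u')^{-1}$ project to $s_b$ and $s_{b'}$ under the natural map $N_G(S)(\bF) \to \bW_0 = W_\Phi$. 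Hence both $P := m m' m \cdots$ and $Q := m' m m' \cdots$ (each with $k$ factors) lie over $w_0$, so $t := P Q^{-1}$ lies in the kernel $T(\bF) = Z_G(S)(\bF)$ of this projection. The entire problem reduces to proving $t = 1$.

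Next I would set up the rank-one transformation formula controlling conjugation by a single reflection representative. For $u \in U_{-b}(\bF) \setminus \{1\}$, the uniqueness of the factorization $u = u_1 m(u) u_2$ with $u_1, u_2 \in U_b(\bF)$ pins down $m(u)$ and yields $m(u) U_c(\bF) m(u)^{-1} = U_{s_b(c)}(\bF)$ for every $c \in \Phi$, together with an explicit description of the induced isomorphism $U_c \xrightarrow{\sim} U_{s_b(c)}$ in terms of the pinnings $x_c, x_{s_b(c)}$ (and, in the quasi-split valued setting, the groups $H_0(\bF_\ta, \bF_{\ta + \ta'})$). Composing these rank-one formulas along the word $s_b s_{b'} \cdots$ gives the conjugation action of $P$ on each root group, and the analogous composition along $s_{b'} s_b \cdots$ gives that of $Q$.

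The heart of the argument is an induction on $k$ guided by the circular ordering $b^{(1)}, \dots, b^{(2k)}$. The hypothesis that the only reduced roots in $\BQ_+ b^{(i-1)} + \BQ_+ b^{(i+1)}$ are $b^{(i-1)}, b^{(i)}, b^{(i+1)}$ is exactly the combinatorial input that identifies $\Phi$ with a standard dihedral arrangement and matches the two reduced words for $w_0$. I would peel off the outermost factor from each of $P$ and $Q$, use the rank-one conjugation formula to slide it across, and invoke the inductive hypothesis for the length-$(k-1)$ configuration obtained by reflecting the chain of roots; this reduces $P = Q$ to a single rank-one identity among the normalized representatives. The conclusion is that $P$ and $Q$ induce the \emph{same} conjugation on a fixed root group, constants included, and combined with the uniqueness in the decomposition $u = u_1 m(u) u_2$ defining $m(\cdot)$, this forces $t = 1$.

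I expect the main obstacle to be the bookkeeping of structure constants and signs in the multiply-laced and non-reduced rank-two types. For $\Phi$ of type $\RA_2$ the root groups are abelian and commute transparently, but for types $\RB_2$, $\RG_2$, and especially the non-reduced $\RB\RC_2$ that arises for ramified unitary groups, the commutator relations among the $U_c$ acquire several terms and the root groups themselves become non-abelian, carrying the $H_0(\bF_\ta, \bF_{\ta+\ta'})$ structure. Checking that these constants conspire so that the composite transformations for $P$ and $Q$ match exactly --- equivalently, that the discrepancy $t$ is trivial and not merely central --- is the delicate point. It is precisely this computation that must be revisited type-by-type when one later asks whether the Bruhat--Tits representatives can be renormalized to representatives $n_{s_\ba}$ with $n_{s_\ba}^2 = \bb^\vee(-1)$, as a Tits group requires.
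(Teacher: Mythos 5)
This proposition is imported verbatim from \cite{BT1} and the paper gives no proof of it, so the only meaningful comparison is with the Bruhat--Tits argument itself. Your opening reduction is sound: both $k$-fold products project to the longest element of the dihedral group $\langle s_b, s_{b'}\rangle$, so their ratio $t$ lies in $Z_G(S)(\bF)$, and you correctly isolate the two essential inputs, namely $m(u)U_c m(u)^{-1}=U_{s_b(c)}$ and the uniqueness of the factorization $u=u_1 m(u) u_2$.

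The two mechanisms you offer for killing $t$ both have genuine gaps. First, the induction ``on $k$'' is not well-founded as described: deleting the outermost roots from a circular order of a rank-$2$ root system does not yield a circular order of a smaller rank-$2$ root system, so there is no ``length-$(k-1)$ configuration'' to which an inductive hypothesis could apply. The actual induction in \cite{BT1} runs along the index $i$ of the circular order \emph{inside the fixed system}: one shows that the $i$-fold partial products carry $U_{b^{(1)}}$ onto $U_{b^{(i+1)}}$ step by step, and that the two full $k$-fold products can each be written in the form $v_1\, m(v)\, v_2$ with the \emph{same} $v$ in the same root group; the uniqueness of the middle factor then gives equality outright, with no residual torus element left to analyze. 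Second, even granting that $P$ and $Q$ induce the same conjugation on one fixed root group $U_c$, that only shows $t$ acts trivially on $U_c$, which does not force $t=1$: any element of the kernel of the root character $c$ on the torus (in particular any central element) acts trivially there. Finally, your expectation that the proof requires a type-by-type check of structure constants in $\RB_2$, $\RG_2$ and $\RB\RC_2$ misreads where the type-dependence enters. The Bruhat--Tits proof is purely axiomatic (it uses only the axioms of a donn\'ee radicielle and never touches structure constants); what is type-dependent in the present paper is the \emph{hypothesis} of the proposition --- for the affine simple root in types $B_n$ and $B\text{-}C_n$ the relevant rank-$2$ subsystem cannot be arranged in circular order with $\bb=\bb^{(1)}$ and $\bb'=\bb^{(k)}$ (see \S\ref{sec:CO}), which is exactly why the paper must handle those cases by other means in Corollary \ref{cor:Coxeter} and Proposition \ref{prop:eug}.
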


We begin with the following lemma.
\begin{lemma}\label{lem:rep} Let $\ba \in \breve\Delta \backslash \breve\Delta_0$ and let $\bb \in \breve\Phi(G,S)$ be the gradient of $\ba$. The element $n_{s_\ba}$ defined in \eqref{def:affinerep} lies in $N_G(S)(\bF)$ and the image of $n_{s_\ba}$ in $\bW_\af$ is $s_\ba$.
\end{lemma}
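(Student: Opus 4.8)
The plan is to establish the two assertions separately: membership in $N_G(S)(\bF)$ is essentially formal, while identifying the image in $\bW_\af$ reduces to a computation with the Kottwitz homomorphism followed by a comparison of translation parts.

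\textbf{Membership and shape of the image.} The factor $n_{s_\bb}$ in \eqref{def:affinerep} is the product $x_{\bb_*}(\cdots)x_{-\bb_*}(\cdots)x_{\bb_*}(\cdots)$ of elements of the root groups $U_{\pm\bb_*}$, so exactly as in the finite relative case of \S\ref{sec:repsASR} (following \cite[\S 4.1.5]{BT2}) it lies in $N_G(S)(\bF)$ and projects under $\breve\phi$ to the finite reflection $s_\bb = s_{\bb_*} \in \bW_0$. The factor $n_{\bb^\vee} = \Nm_{\tF/\bF}\tb_*^\vee(\varpi_\tF)$ is a norm from $\tF$ to $\bF$, hence $\Gamma_0$-fixed, and therefore lies in $T_\Sc(\bF) \subseteq T(\bF)$. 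Since $T = Z_G(S)$ normalizes $S$, both factors lie in $N_G(S)(\bF)$, and so does $n_{s_\ba}$. Using the semidirect product decomposition \eqref{decomp1}, the image $\breve\phi(n_{s_\ba})$ is the product of the translation $\breve\phi(n_{\bb^\vee})$, coming from the torus element $n_{\bb^\vee}$, and the reflection $s_\bb$.

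\textbf{Computing the translation part.} The translation is $\kappa_{T,\bF}(n_{\bb^\vee}) \in X_*(T)_{\Gamma_0}$. Since $T$ splits over $\tF$, the map $\kappa_{T,\tF}$ is the valuation map $T(\tF)\to X_*(T)$ sending $\lambda(\varpi_\tF) \mapsto \lambda$, so $\kappa_{T,\tF}(\tb_*^\vee(\varpi_\tF)) = \tb_*^\vee$. By compatibility of the Kottwitz homomorphism with the norm $\Nm_{\tF/\bF}$ and the projection $\pr\colon X_*(T) \to X_*(T)_{\Gamma_0}$, one gets $\kappa_{T,\bF}(n_{\bb^\vee}) = \pr(\tb_*^\vee) = \bb^\vee$. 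Concretely, pairing against any $\chi \in X^*(T)^{\Gamma_0}$ and using $\Gamma_0$-invariance of $\chi$ gives $\mathrm{val}(\chi(n_{\bb^\vee})) = \langle\chi,\tb_*^\vee\rangle\cdot \mathrm{val}(\Nm_{\tF/\bF}\varpi_\tF) = \langle\chi,\tb_*^\vee\rangle$, since $\tF/\bF$ is totally ramified and hence $\Nm_{\tF/\bF}\varpi_\tF$ is a uniformizer of $\bF$. As $\bb^\vee = \pr(\tb_*^\vee) \in X_*(T_\Sc)_{\Gamma_0}$ lies in the relevant coroot lattice, the image $\breve\phi(n_{s_\ba}) = t_{\bb^\vee}s_\bb$ indeed lies in $\bW_\af$.

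\textbf{Matching with $s_\ba$.} It remains to verify $t_{\bb^\vee}s_\bb = s_\ba$. The affine reflection $s_\ba$ acts on $\CA(S,\bF) = X_*(S)\otimes_\BZ\BR$ by $v \mapsto v - \ba(v)\bb^\vee = s_\bb(v) - \ba(\bv_0)\bb^\vee$, so its linear part is precisely $s_\bb$ and its translation part is the multiple of $\bb^\vee$ governed by the constant $\ba(\bv_0)$. Because $\bv_0$ (identified with the origin) is the chosen extra special vertex in the closure of $\bal$ and $\ba \in \breve\Delta\backslash\breve\Delta_0$ is the affine simple root whose wall is the one not through $\bv_0$, this constant takes the value that matches the translation $\bb^\vee$ produced in the previous step under \eqref{decomp1}. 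Comparing linear and translation parts then yields $t_{\bb^\vee}s_\bb = s_\ba$. I expect this final matching to be the only delicate point: one must reconcile the \emph{direction} (sign) and size of the translation $\bb^\vee$ coming from $\kappa_{T,\bF}$ with the constant term $\ba(\bv_0)$, where the relevant normalization is fixed by the conventions of \cite[\S 4.1]{BT2} together with the choice of $\bv_0$.
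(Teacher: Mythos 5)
Your approach is genuinely different from the paper's: you decompose the image of $n_{s_\ba}$ in $\bW \cong X_*(T)_{\Gamma_0}\rtimes \bW_0$ as a translation part (computed via the Kottwitz homomorphism) times the finite reflection $s_\bb$, whereas the paper exhibits $n_{s_\ba}$ explicitly as an element $m(u)$ for $u$ in a root group (its equations \eqref{eq:nd}, \eqref{eq:d}), places it in the set $M_{\bb,k}$ of \cite[p.~117]{BT1} for an explicitly computed $k$, and invokes \cite[Proposition 6.2.10]{BT1}, which identifies the image of any element of $M_{\bb,k}$ as the affine reflection $s_{\bb+k}$. Your computation $\kappa_{T,\bF}(n_{\bb^\vee})=\pr(\tb_*^\vee)=\bb^\vee$ is correct, and the first two steps are fine.

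However, the final step is a genuine gap, not merely a delicate point to be checked: you have identified the image of $n_{s_\ba}$ as one specific affine reflection $t_{\bb^\vee}s_\bb$ among the infinitely many affine reflections with linear part $s_\bb$ (one for each parallel wall $\bb+k=0$, $k\in\Gamma_\bb'$), but you never determine which of these reflections $s_\ba$ actually is. That requires knowing the constant term of the affine simple root $\ba$, i.e.\ the minimal positive element of the value set $\Gamma_\bb'$ — this is exactly what the paper computes via \cite[\S 4.2.21, Theorem 4.2.22]{BT2} in the non-multipliable case ($k=1/e_\bb$ with $e_\bb=[\bF_\tb:\bF]$) and via \cite[\S 4.3.4]{BT2} in the multipliable case, where the answer even depends on whether the relevant quadratic extension is tamely or wildly ramified ($\alpha=0$ versus $\alpha\neq 0$ in \eqref{GammabF}). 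On top of that, matching $t_{\bb^\vee}$ against the constant term $k$ requires fixing the normalization of how $X_*(T)_{\Gamma_0}$ acts by translations on $V$ (the factor $e_\bb$ and a sign both enter), which is precisely what \cite[Proposition 6.2.10]{BT1} packages for the paper. Asserting that "this constant takes the value that matches the translation $\bb^\vee$" is to assert the conclusion of the lemma; to complete your route you would need to carry out the value-set computation and the normalization check explicitly, at which point you would essentially be reproducing the content of the paper's proof by other means.
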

\begin{proof}

Let $\varpi_{\bF_{\tb_*}} = \Nm_{\tF/\bF_{\tb_*}} \varpi_\tF$. If $2\bb_*$ is a root, let $c_0 \in \bF_{\tb_*}$ be such that $c_0 \gamma_0(c_0) = \varpi_{\bF_{\tb_*}} + \gamma_0(\varpi_{\bF_{\tb_*}})$. Such a $c_0$ exists by \cite[ Chapter V, \S 4,  Proposition 7]{Ser79}. 

There exists $u \in U_{\bb_*}$ such that $n_{s_\ba} =m(u)$ for a suitable $u \in U_{\bb_*}$. In fact, if $2\bb_*$ is not a root, then \begin{align}\label{eq:nd}
    n_{s_\ba} = x_{\bb_*}(\varpi_{\bF_{\tb_*}})x_{-{\bb_*}}(\varpi_{\bF_{\tb_*}}^{-1}) x_{\bb_*}(\varpi_{\bF_{\tb_*}}) = x_{-{\bb_*}}(\varpi_{\bF_{\tb_*}}^{-1})x_{\bb_*}(\varpi_{\bF_{\tb_*}}) x_{-{\bb_*}}(\varpi_{\bF_{\tb_*}}^{-1}).
\end{align}
If $2\bb_*$ is a root, then (note that $\bb^*$ is a negative root)
\begin{align}\label{eq:d}
    n_{s_\ba} = x_{-\bb_*}(c_0 \gamma_0(\varpi_{\bF_{\tb_*}}), \varpi_{\bF_{\tb_*}}) x_{\bb_*}(c_0, \gamma_0(\varpi_{\bF_{\tb_*}})^{-1}) x_{-\bb_*}(c_0 \varpi_{\bF_{\tb_*}}, \varpi_{\bF_{\tb_*}}). 
\end{align}

Let $\Gamma_\bb, \Gamma_\bb'$ be the value sets of the root $\bb$ as in \cite[\S 4.2.21]{BT2}. By \cite[Theorem 4.2.22]{BT2}, the affine roots with gradient $\bb$ are of the form $\bb+k, k \in \Gamma_\bb'$.  First, let us deal with the case when $2\bb_*$ is not a root. In this case, 

\[\Gamma_\bb = \Gamma_\bb' =  \frac{1}{e_\bb}\BZ\] where $e_\bb = [\bF_\tb:\bF]$. Since the affine simple root $\ba$ is one of the bounding hyperplanes of the alcove $\bal$ and since $\bb+k$ is an affine root for each $k \in \Gamma_\bb'$, we see that $s_\ba = s_{\bb+\frac{1}{e_\bb}}$. With $M_{\bb,k}$ as in \cite[Page 117]{BT1}, \eqref{eq:nd} and Equation (1) in \cite[\S 4.2.2]{BT2} imply that $n_{s_\ba}$ lies in $M_{\bb, \frac{1}{e_\bb}}$. Then \cite[Proposition 6.2.10]{BT1} implies that the image of $n_{s_\ba}$ in $\bW_\af$ is $s_\ba$.

Next, suppose $2\bb_*$ is a root. In this case, let $e_{\bb_*} = [\bF_{\tb_*}:\bF]$ and $e_{2\bb_*} = [\bF_{\tb_*+\tb_*'}:\bF]$. Note that $e_{\bb_*} = 2e_{2\bb_*}$. As in \cite[Lemma 4.3.3]{BT2}, let $\bF_{\tb_*} = \bF_{\tb_*+\tb_*'}[t]$ where $t^2 - \alpha t +\beta=0$. Then by \cite[\S 4.3.4]{BT2},
    \begin{align}\label{GammabF}\Gamma_{\bb_*}' = \begin{cases} 
  \frac{1}{e_{\bb_*}}\BZ & \text{if  $\alpha=0$}\\
 \frac{1}{2e_{\bb_*}} +\frac{1}{e_{\bb_*}}\BZ & \text{if $\alpha\neq 0$}
 \end{cases} \end{align}
and
 \begin{align}\label{Gamma2bF}\Gamma_{2\bb_*}' = \begin{cases} 
  \frac{1}{e_{\bb_*}} +\frac{2}{e_{\bb_*}}\BZ & \text{if  $\alpha=0$}\\
 \frac{2}{e_{\bb_*}}\BZ & \text{if $\alpha\neq 0$}
 \end{cases} \end{align}

When $\alpha =0$, $\bb_* +\frac{1}{e_{\bb_*}}$ and  $2\bb_* +\frac{1}{e_{\bb_*}}$ are both affine roots. When $\alpha \neq 0$,  $\bb_* +\frac{1}{2e_{\bb_*}}$ and  $2\bb_* +\frac{2}{e_{\bb_*}}$ are both affine roots. Since the affine simple root $\ba$ yields one of the bounding hyperplanes of the alcove $\bal$, we see that $s_\ba = s_{\bb_* +\frac{1}{2e_{\bb_*}}}$. 
Again,  with $M_{\bb_*,k}$ as in \cite[Page 117]{BT1}, \eqref{eq:d} and Equation (1) in \cite[\S 4.2.2]{BT2} imply that $n_{s_\ba}$ lies in $M_{\bb_*, \frac{1}{2e_{\bb_*}}}$. Then \cite[Proposition 6.2.10]{BT1} again implies that the image of $n_{s_\ba}$ in $\bW_\af$ is $s_\ba$.
\end{proof}

\begin{lemma}\label{lem:CR}
    Let $\ba , \ba' \in \breve\Delta$ and let $\bb$ and $ \bb'$ be the gradients of $\ba$ and $\ba'$ respectively. Consider the root system $\Phi_{\bb, \bb'} = \langle \bb, \bb' \rangle$ consisting of elements of $\breve\Phi(G,S)$ that lie in the $\BQ$-span of $\bb$ and $\bb'$.  If the elements $\bb^{(1)}, \cdots, \bb^{(2k)}$ of $\Phi^\red_{\bb, \bb'}$ can be arranged in circular order so that $\bb^{(1)} = \bb$ and $\bb^{(k)} = \bb'$, then the elements $n_{s_\ba}$ and $n_{s_\ba'}$ satisfy Coxeter relations. 
\end{lemma}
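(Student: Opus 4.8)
The plan is to obtain the braid relation between $n_{s_\ba}$ and $n_{s_{\ba'}}$ as a direct application of Proposition \ref{Prop:BT1Cox} to the rank two system $\Phi_{\bb,\bb'}$; the only real work is to recognize our Chevalley--Steinberg representatives as elements of the form $m(u)$ in the sense of Bruhat--Tits, and to reconcile the inverse appearing in \textit{loc.cit.} First I would record that each of $n_{s_\ba}, n_{s_{\ba'}}$ is of the form $m(u)$ with $u$ in the corresponding reduced root group. Write $\bb_*$ (resp. $\bb'_*$) for the indivisible root in the direction of the gradient $\bb$ (resp. $\bb'$), so that $s_\ba = s_{\bb_*}$ and $s_{\ba'} = s_{\bb'_*}$. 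If $\ba \in \breve\Delta_0$ this is immediate from \eqref{Rep1}--\eqref{Rep2}, and if $\ba \in \breve\Delta \setminus \breve\Delta_0$ it is exactly the content of the first half of the proof of Lemma \ref{lem:rep}, where $n_{s_\ba}$ is exhibited as $m(u)$ for a suitable $u \in U_{\bb_*}$ via \eqref{eq:nd} and \eqref{eq:d}; the same applies to $\ba'$. Thus $n_{s_\ba} = m(u)$ and $n_{s_{\ba'}} = m(v)$ for suitable $u \in U_{\bb_*}$, $v \in U_{\bb'_*}$.

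Next I would set up the hypotheses of Proposition \ref{Prop:BT1Cox}. By assumption $\Phi_{\bb,\bb'}$ is a rank two root system whose reduced roots $\bb^{(1)}, \dots, \bb^{(2k)}$ are arranged in circular order with $\bb^{(1)} = \bb_*$ and $\bb^{(k)} = \bb'_*$, which is precisely the configuration of \textit{loc.cit.} In particular the finite dihedral group $\langle s_{\bb_*}, s_{\bb'_*}\rangle$ has order $2k$; and since $\ba, \ba'$ are walls of the single alcove $\bal$ with linearly independent gradients, their reflections generate a finite dihedral group in which $s_\ba s_{\ba'} = s_{\bb_*} s_{\bb'_*}$ has order $k$. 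Hence $k$ is the Coxeter exponent $m_{s_\ba, s_{\ba'}}$, so that the identity with $k$ factors produced by the proposition will be exactly the sought braid relation.

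The one point requiring care — and the main obstacle — is that Proposition \ref{Prop:BT1Cox} produces $m m' \cdots = m' m \cdots$ for $m = m(u)$ and $m' = m(u')^{-1}$, with an \emph{inverse} on the second factor; since the braid relation is genuinely sensitive to this inverse when $k$ is odd, I cannot simply take $m' = n_{s_{\ba'}}$. To fix this I would use that the set $\{m(u) : u \in U_{\bb'_*}\setminus\{1\}\}$ is stable under inversion: inverting the explicit word \eqref{eq:nd} (resp. \eqref{eq:d}) for $n_{s_{\ba'}}$ merely negates the defining parameter and returns a word of the same shape, so $n_{s_{\ba'}}^{-1} = m(u')$ for some $u' \in U_{\bb'_*}$. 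Choosing this $u'$ gives $m(u')^{-1} = n_{s_{\ba'}}$, and applying Proposition \ref{Prop:BT1Cox} with this $u$ and $u'$ yields
\[
\underbrace{n_{s_\ba} n_{s_{\ba'}} n_{s_\ba}\cdots}_{k} = \underbrace{n_{s_{\ba'}} n_{s_\ba} n_{s_{\ba'}}\cdots}_{k},
\]
which is the Coxeter (braid) relation between $n_{s_\ba}$ and $n_{s_{\ba'}}$. The quadratic relations $n_{s_\ba}^2 = \bb^\vee(-1)$ are recorded separately in \eqref{eq:rfsr2} and \S\ref{sec:squares}, so this completes the verification of the Coxeter relations.
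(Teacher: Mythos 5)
Your proof is correct and follows essentially the same route as the paper: one observes that $n_{s_\ba}$ and $n_{s_{\ba'}}$ are of the form $m(u)$, $m(u')$ (via \eqref{Rep1}, \eqref{Rep2} for simple roots and via \eqref{eq:nd}, \eqref{eq:d} from Lemma \ref{lem:rep} otherwise) and then applies Proposition \ref{Prop:BT1Cox}. Your two additional observations --- that the set $\{m(u)\}$ is stable under inversion, so the inverse appearing in \textit{loc.cit.} is harmless, and that $k$ agrees with the Coxeter exponent of $s_\ba s_{\ba'}$ --- are points the paper's proof leaves implicit, and both are handled correctly.
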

\begin{proof}

    In view of Proposition \ref{Prop:BT1Cox} and the hypothesis of the lemma, it suffices to see that $n_{s_\ba}$ and $n_{s_\ba'}$ are of the form $m(u)$ and $m(u')$ for suitable $u \in U_{\bb_*}$ and $u'\in U_{\bb_*'}$. If $\ba, \ba' \in \breve\Delta_0$, then this is clear from \eqref{Rep1} and \eqref{Rep2}. If $\ba \in \breve\Delta \backslash \breve\Delta_0$, then  with $\bb$ the gradient of $\ba$, this was observed in equations \eqref{eq:nd} and \eqref{eq:d} of Lemma \ref{lem:rep}. 
\end{proof}

\subsection{On circular ordering}\label{sec:CO}
Now, let $\ba , \ba' \in \breve\Delta$ and let $\bb$ and $ \bb'$ be the gradients of $\ba$ and $\ba'$ respectively. In order to prove that $n_{s_\ba}$ and $n_{s_{\ba'}}$ satisfy Coxeter relations, we consider the root system $\Phi_{\bb, \bb'}$ spanned by $\bb$ and $\bb'$, and see if the elements $\bb^{(1)}, \cdots, \bb^{(2k)}$ of $\Phi_{\bb, \bb'}^\red$ can be arranged in circular order so that $\bb^{(1)} = \bb$ and $\bb^{(k)} = \bb'$. Recall that $\Phi_{\bb, \bb'} = \breve\Phi(G,S) \cap \BQ\langle \bb, \bb'\rangle$  is the closed subsystem of $\breve\Phi(G,S)$. If $\ba, \ba' \in \breve\Delta_0$, then this can be done, as already observed in \cite[Proposition IV.6]{AHHV}. So we assume that $\ba' \in \breve\Delta_0$ and $\ba \in \breve\Delta \backslash \breve\Delta_0$. We go through the table given in \cite[\S 1.4.6]{BT1} case by case, and follow the notation of \cite[Plate I - VII]{Bou02} for the roots.  
\begin{enumerate}
    \item[$A_n$:]  If $\ba' = \a_i$ for $i \neq 1, n$, then $\Phi_{\bb, \bb'}$ is a product of rank 1 root systems. If $\ba' = \a_n$, then $\Phi_{\bb, \bb'} = \langle \epsilon_{n+1}-\epsilon_1, \epsilon_n - \epsilon_{n+1} \rangle$ and the circular order is given by $\epsilon_{n+1}-\epsilon_1, \epsilon_n-\epsilon_1, \epsilon_n - \epsilon_{n+1}, \epsilon_1 - \epsilon_{n+1}, \epsilon_1 - \epsilon_n, \epsilon_{n+1} - \epsilon_n$. If $\ba' = a_1$, then $\Phi_{\bb, \bb'} = \langle \epsilon_{n+1}-\epsilon_1, \epsilon_1 - \epsilon_{2} \rangle$ and the circular order is given by $\epsilon_{n+1}-\epsilon_1, \epsilon_{n+1}-\epsilon_2, \epsilon_1 - \epsilon_{2}, \epsilon_1 - \epsilon_{n+1}, \epsilon_2 - \epsilon_{n+1}, \epsilon_{2} - \epsilon_1$.
    \item[$B_n$:]  If $\ba' = \a_i$ for $i>2$, then $\Phi_{\bb, \bb'}$ is a product of rank 1 root systems. If $\ba' = \a_2$, then $\Phi_{\bb, \bb'} = \langle -\epsilon_1-\epsilon_2, \epsilon_2 - \epsilon_3 \rangle$ and the circular order is given by $ -\epsilon_1-\epsilon_2, -\epsilon_1-\epsilon_3,\epsilon_2-\epsilon_3,\epsilon_1+\epsilon_2,\epsilon_1+\epsilon_3,\epsilon_3-\epsilon_2$.  If $\ba' = \a_1$, then $\Phi_{\bb, \bb'} = \langle -\epsilon_1 - \epsilon_2, \epsilon_1 - \epsilon_2 \rangle$ which generates an irreducible rank 2 root system with 8 roots. These are $\{\pm(\epsilon_1 \pm \epsilon_2), \pm \epsilon_1, \pm \epsilon_2\}$. These roots \textbf{cannot be put in circular order with $\bb^{(1)} = \bb = -\epsilon_1-\epsilon_2$ and $\bb^{(4)} = \bb' = \epsilon_1 -\epsilon_2$}. 
    \item[$B - C_n$:] If $\ba' = \a_i$ for $i>2$, then $\Phi_{\bb, \bb'}$ is a product of rank 1 root systems. If $\ba' = \a_2$, then $\Phi_{\bb, \bb'} = \langle -\epsilon_1-\epsilon_2, \epsilon_2 - \epsilon_3 \rangle$ and the circular order is given by $ -\epsilon_1-\epsilon_2, -\epsilon_1-\epsilon_3,\epsilon_2-\epsilon_3,\epsilon_1+\epsilon_2,\epsilon_1+\epsilon_3,\epsilon_3-\epsilon_2$. If $\ba' = \a_1$, then $\Phi_{\bb, \bb'} = \langle -\epsilon_1 - \epsilon_2, \epsilon_1 - \epsilon_2 \rangle$ which generates an irreducible rank 2 root system with 8 roots. These are $\{\pm(\epsilon_1 \pm \epsilon_2), \pm 2\epsilon_1, \pm 2\epsilon_2\}$. These roots \textbf{cannot be put in circular order with $\bb^{(1)} = \bb = -\epsilon_1-\epsilon_2$ and $\bb^{(4)} = \bb' = \epsilon_1 -\epsilon_2$}. 
    \item[$C_n$:]  If $\ba' = \a_i$ for $i\geq 2$, then $\Phi_{\bb, \bb'}$ is a product of rank 1 root systems. If $\ba' = \a_1$, then $\Phi_{\bb, \bb'} = \langle -2\epsilon_1, \epsilon_1 - \epsilon_2 \rangle$ and the circular order is given by $ -2\epsilon_1, -\epsilon_1-\epsilon_2,-2\epsilon_2,\epsilon_1 - \epsilon_2, 2\epsilon_1, \epsilon_1+\epsilon_2,2\epsilon_2, \epsilon_2 - \epsilon_1$.
    \item[$C - B_n$:]  If $\ba' = \a_i$ for $i\geq 2$, then $\Phi_{\bb, \bb'}$ is a product of rank 1 root systems. If $\ba' = \a_1$, then $\Phi_{\bb, \bb'} = \langle -\epsilon_1, \epsilon_1 - \epsilon_2 \rangle$ and the circular order is given by $ -\epsilon_1, -\epsilon_1-\epsilon_2,-\epsilon_2,\epsilon_1 - \epsilon_2, \epsilon_1, \epsilon_1+\epsilon_2,\epsilon_2, \epsilon_2 - \epsilon_1$.
    \item[$C - BC_n^{III}$:]  If $\ba' = \a_i$ for $i\geq 2$, then $\Phi_{\bb, \bb'}$ is a product of rank 1 root systems. If $\ba' = \a_1$, then $\Phi_{\bb, \bb'} = \langle -2\epsilon_1, \epsilon_1 - \epsilon_2 \rangle$, $\Phi_{\bb, \bb'}^\red = \langle -\epsilon_1, \epsilon_1 - \epsilon_2 \rangle$ and the circular order is given by $ -\epsilon_1, -\epsilon_1-\epsilon_2,-\epsilon_2,\epsilon_1 - \epsilon_2, \epsilon_1, \epsilon_1+\epsilon_2,\epsilon_2, \epsilon_2 - \epsilon_1$.
    \item[$D_n$:] If $\ba' = \a_i$ for $i=1$ or $i>2$, then $\Phi_{\bb, \bb'}$ is a product of rank 1 root systems. If $\ba' = \a_2$, then $\Phi_{\bb, \bb'} = \langle -\epsilon_1-\epsilon_2, \epsilon_2 - \epsilon_3 \rangle$ and the circular order is given by $ -\epsilon_1-\epsilon_2, -\epsilon_1-\epsilon_3,\epsilon_2-\epsilon_3,\epsilon_1+\epsilon_2,\epsilon_1+\epsilon_3,\epsilon_3-\epsilon_2$.
    \item[$E_6$:] If $\ba' = \a_i$ for $i \neq 2$, then $\Phi_{\bb, \bb'}$ is a product of rank 1 root systems. If $\ba' = \a_2 = \epsilon_1+\epsilon_2$ and $\bb = -\frac{1}{2}(\epsilon_1+\cdots +\epsilon_5 - \epsilon_6-\epsilon_7+\epsilon_8)$, then $\Phi_{\bb, \bb'} = \langle \bb, \bb'\rangle$ and $\bb +\bb'$ is a negative root, whose negative is  represented as 
    
    \begin{align*}
        1\; 2 \; &3\; 2\; 1\\
        &1
    \end{align*} in \cite[Plate V]{Bou02}. The circular order is given by $ \bb, \bb+\bb', \bb', -\bb, -(\bb+\bb'), -\bb' $.
    \item[$E_7$:] If $\ba' = \a_i$ for $i > 1$, then $\Phi_{\bb, \bb'}$ is a product of rank 1 root systems. If $\ba' = \a_1 = \frac{1}{2}(\epsilon_1+\epsilon_8) - \frac{1}{2}(\epsilon_2+\epsilon_3 +\cdots +\epsilon_7)$ and $\bb = - (\epsilon_8 - \epsilon_7)$, then then $\Phi_{\bb, \bb'} = \langle \bb, \bb'\rangle$ and $\bb +\bb'$ is a negative root, whose negative is  represented as 
    
    \begin{align*}
        1\; 3 \; &4\; 3\; 2\; 1\\
        &2
    \end{align*} in \cite[Plate VI]{Bou02}. The circular order is given by $ \bb, \bb+\bb', \bb', -\bb, -(\bb+\bb'), -\bb'.$ 
    \item[$E_8$:] If $\ba' = \a_i$ for $i \neq 8$, then $\Phi_{\bb, \bb'}$ is a product of rank 1 root systems. If $\ba' = \a_8 = \epsilon_7 - \epsilon_6$ and $\bb = - (\epsilon_7 + \epsilon_8)$, then then $\Phi_{\bb, \bb'} = \langle \bb, \bb'\rangle$ and $\bb +\bb'$ is a negative root, whose negative is  represented as 
    
    \begin{align*}
        2\; 4 \; &6\; 5\; 4\; 3\; 1\\
        &3
    \end{align*} in \cite[Plate VII]{Bou02}. The circular order is given by $ \bb, \bb+\bb', \bb', -\bb, -(\bb+\bb'), -\bb'.$ 
    \item[$F_4$:]  If $\ba' = \a_i$ for $i >1$, then $\Phi_{\bb, \bb'}$ is a product of rank 1 root systems. If $\ba' = \a_1$, then $\Phi_{\bb, \bb'} = \langle -\epsilon_{1}-\epsilon_2, \epsilon_2 - \epsilon_{3} \rangle$ and the circular order is given by $-\epsilon_{1}-\epsilon_2, -\epsilon_1-\epsilon_3, \epsilon_2 - \epsilon_{3}, \epsilon_1 + \epsilon_{2}, \epsilon_1 + \epsilon_3, \epsilon_{3} - \epsilon_2$. 
    \item[$F_4^1$:]If $\ba' = \a_i$ for $i >1$, then $\Phi_{\bb, \bb'}$ is a product of rank 1 root systems. If $\ba' = \a_1$, then $\Phi_{\bb, \bb'} = \langle -\epsilon_{1}-\epsilon_2, \epsilon_2 - \epsilon_{3} \rangle$ and the circular order is given by $-\epsilon_{1}-\epsilon_2, -\epsilon_1-\epsilon_3, \epsilon_2 - \epsilon_{3}, \epsilon_1 + \epsilon_{2}, \epsilon_1 + \epsilon_3, \epsilon_{3} - \epsilon_2$. 
    \item[$G_2$:] If $\ba' = \a_1$, then $\Phi_{\bb, \bb'}$ is a product of rank 1 root systems. If $\ba' = \a_2 = -2\epsilon_1+\epsilon_2 +\epsilon_3$ and $\bb = \epsilon_1+\epsilon_2 - 2\epsilon_3$, then $\Phi_{\bb, \bb'} = \langle \bb. \bb'\rangle$ is a rank 2 root system such that $\bb+\bb' = 2\epsilon_2 - \epsilon_1 - \epsilon_3$ is a root and the circular order is given by $ \bb, \bb+\bb', \bb', -\bb, -(\bb+\bb'), -\bb'$.
    \item[$G_2^1$:] If $\ba' = \a_1$, then $\Phi_{\bb, \bb'}$ is a product of rank 1 root systems. If $\ba' = \a_2 = -2\epsilon_1+\epsilon_2 +\epsilon_3$ and $\bb = \epsilon_1+\epsilon_2 - 2\epsilon_3$, then $\Phi_{\bb, \bb'} = \langle \bb. \bb'\rangle$ is a rank 2 root system such that $\bb+\bb' = 2\epsilon_2 - \epsilon_1 - \epsilon_3$ is a root and the circular order is given by $ \bb, \bb+\bb', \bb', -\bb, -(\bb+\bb'), -\bb'$.
\end{enumerate}
\subsection{On the existence of representatives of affine simple reflections that satisfy Coxeter relations}
Recall that for a connected, reductive group $H$ over $F$, we say that $H_\bF$ is of even (resp. odd) unitary type if $H_\bF^\Sc \cong \Res_{L/\bF} \SU_n$ where $\SU_n$ is the special unitary group defined by a ramified quadratic extension $\tF/L$ and with $n$ even (resp. $n$ odd). Similarly, we say $H_\bF$ is of odd orthogonal type if $H_\bF^\Sc \cong \Res_{\tF/\bF} \mathrm{Spin}_{2n+1}$.

First, let us take up the case when $G_\bF^{\der}$ does not contain a simple factor of even unitary type. Note that by \cite[\S 4.2.23]{BT2}, this is the same as assuming that no simple factor of $G_\bF^\der$ has affine root system is of type $B-C_n$.  Also note that by \cite[\S 4.2.23]{BT2}, saying that $G_\bF^\der$ does not contain a simple factor of odd orthogonal type is the same as assuming that no simple factor of $G_\bF^\der$ has affine Dynkin diagram of type $B_n$.
\begin{corollary}\label{cor:Coxeter} Let $G$ be  connected, reductive group over $F$. Assume that $G_\bF^{\der}$ does not contain a simple factor of even unitary type. For $\ba, \ba' \in \breve\Delta$, let $n_{s_\ba}, n_{s_{\ba'}}$ be the representatives in $G_\bF$ of $s_\ba$ and $s_{\ba'}$ respectively defined as in \eqref{Rep1}, \eqref{Rep2}, and \eqref{def:affinerep}. Then the elements $n_{s_\ba}, n_{s_{\ba'}}$ satisfy Coxeter relations. 
    \end{corollary}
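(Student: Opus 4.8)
The plan is to reduce each Coxeter relation to the rank-2 subsystem $\Phi_{\bb,\bb'}$ generated by the gradients $\bb, \bb'$ of $\ba, \ba'$, and then to apply either the circular-ordering criterion of Lemma~\ref{lem:CR} or a direct commutation argument. Throughout I use that each representative lies in a rank-1 subgroup: for $\ba \in \breve\Delta_0$ this is immediate from \eqref{Rep1} and \eqref{Rep2}, while for $\ba \in \breve\Delta \setminus \breve\Delta_0$ it is the content of Lemma~\ref{lem:rep}, which shows $n_{s_\ba} = m(u) \in \langle U_{\bb_*}, U_{-\bb_*}\rangle$.

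First I would dispose of the case in which $\bb$ and $\bb'$ lie in distinct irreducible components of $\breve\Phi(G,S)$; this includes every pair in which $\ba, \ba'$ are affine nodes of different simple factors. Then $\bb_*$ and $\bb'_*$ are orthogonal and no positive combination $i\bb_* + j\bb'_*$ is a root, so the commutator relations among the root groups force $\langle U_{\pm\bb_*}\rangle$ and $\langle U_{\pm\bb'_*}\rangle$ to commute; hence $n_{s_\ba}$ and $n_{s_{\ba'}}$ commute, which is the required order-2 relation. This reduces everything to a single simple factor.

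Within one simple factor, if both $\ba, \ba' \in \breve\Delta_0$ the gradients can be placed in circular order by \cite[Proposition~IV.6]{AHHV}, so Lemma~\ref{lem:CR} gives the relation. If instead one of them, say $\ba$, lies in $\breve\Delta \setminus \breve\Delta_0$---so $\ba$ is the unique affine node $\a_0$ of that factor---I would run through the table of \S\ref{sec:CO}. Under the standing hypothesis that $G_\bF^\der$ has no even unitary factor, the type $B-C_n$ row is excluded, and for every remaining type \emph{except} $B_n$ the displayed ordering puts $\bb$ at position $1$ and $\bb'$ at position $k$, so Lemma~\ref{lem:CR} applies once more.

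The single remaining case---and the main obstacle---is an odd orthogonal factor of type $B_n$ with $\ba = \a_0$ and $\ba' = \a_1$, where $\bb = -\epsilon_1-\epsilon_2$ and $\bb' = \epsilon_1-\epsilon_2$ are two \emph{orthogonal long} roots of an irreducible $B_2$. They do not form a basis, so \S\ref{sec:CO} records that they cannot be placed in circular order at positions $1$ and $k$, and Lemma~\ref{lem:CR} is unavailable. Here, however, $\bb+\bb' = -2\epsilon_2$ and $\bb-\bb' = -2\epsilon_1$ are \emph{not} roots of $B_2$, so the four root groups $U_{\pm\bb}, U_{\pm\bb'}$ again commute; applying \cite[Lemma~3.1]{Gan15} to this factor shows that $n_{s_\ba}$ and $n_{s_{\ba'}}$ commute, which is exactly the order-2 relation forced by $\a_0$ and $\a_1$ being non-adjacent in $\tB_n$. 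The contrast with the even unitary case $B-C_n$ pinpoints why that type must be excluded: there the analogous gradients are \emph{short} roots with $\bb+\bb'$ a root, so neither circular ordering nor commutation is available---this is ultimately what obstructs the Tits group in Theorem~\ref{thm:intro}(3).
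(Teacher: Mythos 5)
Your proposal is correct, and it follows the paper's architecture for most of the argument: reduce to the rank-2 subsystem $\Phi_{\bb,\bb'}$, use the circular orderings of \S\ref{sec:CO} together with Lemma~\ref{lem:CR} whenever $\bb,\bb'$ can be placed at positions $1$ and $k$, and isolate the odd orthogonal $B_n$ configuration $\bb=-\epsilon_1-\epsilon_2$, $\bb'=\epsilon_1-\epsilon_2$ as the one genuine exception. Where you diverge is in how you dispatch that exception. The paper passes to the split group over $\tF$, writes $n_{s_\ba}=\prod_{\gamma\in\Aut(\tF/\bF)}\gamma(n_{s_\ta})$ and $n_{s_{\ba'}}=\prod_{\gamma}\gamma(n_{s_{\ta'}})$, invokes \cite[Lemma~3.1]{Gan15} for each split factor, and multiplies the resulting commutation relations back together. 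You instead argue directly in the relative root datum over $\bF$: since every $p\bb+q\bb'$ with $p,q\in\BZ_{>0}$ (and likewise for the other three sign choices) has a coordinate of absolute value at least $2$ and hence is not a root of the ambient $B_2$, axiom (DR2) of \cite[\S 6.1.1]{BT1} forces all four cross-commutators $[U_{\pm\bb},U_{\pm\bb'}]$ to vanish, and since \eqref{Rep1} and \eqref{eq:nd} place $n_{s_{\ba'}}$ and $n_{s_\ba}$ inside $\langle U_{\pm\bb'}\rangle$ and $\langle U_{\pm\bb}\rangle$ respectively, the required order-2 relation follows with no descent to $\tF$ and no external citation. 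This is more self-contained and also absorbs, by the same computation, the many sub-cases where $\Phi_{\bb,\bb'}$ is a product of rank-1 systems, which the paper leaves implicit. Two small points of care: the justification should quantify over all positive integral combinations $p\bb+q\bb'$, not only $\bb\pm\bb'$ (the stronger statement does hold here), and your parenthetical appeal to \cite[Lemma~3.1]{Gan15} ``for this factor'' is redundant once the commutator argument is in place — that lemma is stated for split groups and is what the paper applies after base change to $\tF$, not to the relative $B_2$ over $\bF$.
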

    \begin{proof}
        Since we are only dealing with the elements of the affine Weyl group over $\bF$, we may and do assume that $G_\bF$ is almost $\bF$-simple and simply connected. The hypothesis of the corollary then says that $G_\bF^\der$ does not have affine Dynkin diagram of type $B-C_n$. Now the corollary follows from the discussion in \S \ref{sec:CO} and Lemma \ref{lem:CR} if $G_\bF$ is not of odd orthogonal type. So we may and do assume $G_\bF$ is almost $\bF$-simple and its affine Dynkin diagram is of type $B_n$. Then note that $G_\tF$ is split and its affine Dynkin diagram is a product of affine Dynkin diagrams of type $B_n$ and the elements of $\Aut(\tF/\bF)$ permutes these factors transitively. Without loss of generality, assume $\ba \in \breve\Delta \backslash \breve\Delta_0$ and $\ba' \in \breve\Delta_0$. We refer to case $B_n$ in  Section \ref{sec:CO}. The only case where \cite{BT2}[Proposition 6.1.8] cannot be applied to verify that the representatives satisfy Coxeter relations is when $\bb  = -\epsilon_1-\epsilon_2$ and $\bb' = \epsilon_1 - \epsilon_2$.   The Coxeter relation for these reflections is $s_\ba s_{\ba'} = s_{\ba'} s_\ba$. 
        
        Let $\tilde\Delta$ be the set of affine roots of $G(\tF)$ (relative to $T$) that contains $\tilde\Delta_0$ and whose bounding hyperplanes form an alcove in the apartment $\CA(T,\tF)$. Then there exist $\ta \in \tilde\Delta \backslash \tilde\Delta_0$, $\ta' \in \tilde\Delta_0$ such that
        \begin{itemize}
        \item $\ta, \ta'$ lie in a same connected component of the affine Dynkin diagram over $\tF$,
            \item $s_\ta s_{\ta'} = s_{\ta'} s_\ta$,
            \item the gradient $\tb$ of $\ta$ lies in $ \tilde\Phi(G,T)$, $\tb|_{S} = \bb$,
            \item $\ta'|_S = \ba'$.  
        \end{itemize}
        Further, $n_{s_\ta} = \tb^\vee(\varpi_\tF) n_{s_\tb}$, $n_{s_\ba} = \prod_{\gamma \in \Aut(\tF/\bF)} \gamma(n_{s_\ta})$ and $n_{s_{\ba'}} = \prod_{\gamma \in \Aut(\tF/\bF)}\gamma( n_{s_{\ta'}})$. 
       Now, $n_{s_\ta}n_{s_{\ta'}} = n_{s_{\ta'}} n_{s_\ta}$ by \cite[Lemma 3.1]{Gan15}. Then 
       \begin{align*}
           n_{s_\ba} n_{s_{\ba'}} &= \prod_{\gamma \in \Aut(\tF/\bF)} \gamma(n_{s_\ta}n_{s_{\ta'}})\\   
           & = \prod_{\gamma \in \Aut(\tF/\bF)} \gamma(n_{s_{\ta'}} n_{s_\ta})\\
           & = n_{s_{\ba'}} n_{s_{\ba}}
       \end{align*}
       This finishes the proof of the corollary.
    \end{proof}
   \begin{remark}
       We mention a correction to the proof of \cite[Lemma 5.1]{GH23}. When $G_\bF^\der$ is $\bF$-split and has a simple factor whose affine Dynkin diagram is of type $B_n$, \cite[Proposition 6.1.8]{BT1} cannot be used.  However, we may use \cite[Lemma 3.1]{Gan15} instead, as we have done above. So the statement of \cite[Lemma 5.1]{GH23} is still correct. 
   \end{remark} 
    \subsubsection{The case when $G_\bF$ is an even unitary group} In  \cite[\S 5.1]{GH23}, we showed that the Tits group of the affine Weyl group of the unitary group $\U_6 \subset \Res_{\tF/\bF} \GL_6$ does not exist when $\tF/\bF$ is wildly ramified. Note that the existence of Tits group requires both conditions 2(a) and 2(b) of Definition \ref{def:tits2} to hold. We now relax the requirement that 2(a) holds and show that when $G_\bF =\U_{2r} \subset \Res_{\tF/\bF} \GL_{2r}$, there do not even exist representatives $\{n_{s_\ba}\;|\; \ba \in \breve\Delta\}$ that satisfy Coxeter relations, when $\tF/\bF$ is a (ramified) quadratic extension. In particular, this example, combined with the results in \cite{GH23} show that a Tits group of the affine Weyl group of an even unitary group over $F$ exists if and only if it splits over an unramified extension of $F$.

    \begin{remark}\label{Rem:Morris} The question of existence of representatives of the affine simple reflections that satisfy Coxeter relations has been considered in literature; see \cite[Proposition 5.2]{Mor93}. The proof there  also relies on \cite[Proposition 6.1.8]{BT1}. However, when considers the affine root system of a ramified even unitary group $U_{2r}, r \geq 3$, its affine Dynkin diagram is of type $B - C_n$, and as noted in \S \ref{sec:CO}, a certain finite rank 2 root system involving the gradient of the affine simple root does not satisfy the hypothesis of \cite[Proposition 6.1.8]{BT1}, so this result cannot be applied in the case of a ramified even unitary group either. It is still natural to ask if there exist representatives $\{n_{s_\ba}\;|\; \ba \in \breve\Delta\}$ that satisfy all the Coxeter relations when $G$ is a ramified even unitary group, and Proposition \ref{prop:eug} gives a negative answer to this question. 
    \end{remark}

 Let $G$ be a connected reductive group over $F$ with $G_\bF = \U_{2r}\subset \Res_{\tF/\bF}\GL_{2r}$, $r\geq 3$, where $\tF/\bF$ is a (ramified) quadratic extension. Let $\tau$ denote the generator of $\Gal(\tF/\bF)$.  Let $T_\bF$ denote the standard maximal torus in $G_\bF$ and $S_\bF$ the maximal $\bF$-split component of $T_\bF$. Then 
\begin{gather*} \breve\Phi(G,S) = \{\pm \epsilon_i \pm \epsilon_j\;|\; 1\le i<j\le r\} \cup \{\pm2 \epsilon_i\;|\; 1 \le i \le r\}, \\ 
\breve\Phi_{\af}(G,S) = \{ \pm \epsilon_i \pm \epsilon_j +\frac{1}{2}\BZ\;|\;1 \le i<j \le r\} \cup \{\pm 2\epsilon_i+\BZ\;|\; 1 \le i\le r\}. 
\end{gather*}
The hyperplanes with respect to the roots $\ba_1:= \epsilon_1-\epsilon_2, \ba_2:=\epsilon_2-\epsilon_3, \cdots, \ba_{r-1}: = \epsilon_{r-1} - \epsilon_r, \ba_r:= 2\epsilon_r, \ba_0:=-\epsilon_1 - \epsilon_2 +\frac{1}{2}$ form an alcove in $\CA(S,\bF)$ which we denote as $\bal$. Let $\breve\Delta_0 =\{\ba_i\;|\; 1\le i \le r\}$ and $\breve\Delta = \breve\Delta_0 \cup \{ \ba_0\}$. Let $\bs_i = s_{\ba_i}, 0 \le i \le r$. 

\begin{proposition}\label{prop:eug}
    With notation as in the preceding paragraph, there don't exist representatives $m(\bs_i) \in G(\bF)$ of $\bs_i, 0 \leq i \leq r$ such that $\{m(\bs_i)\;|\; 0 \leq i \leq r\}$ satisfy all the Coxeter relations.
\end{proposition}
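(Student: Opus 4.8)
The plan is to show that the three Coxeter relations involving $\bs_0,\bs_1,\bs_2$ alone already cannot be lifted, which is enough because for $r\ge 3$ these three reflections generate a standard parabolic subgroup of $\bW_\af$ of type $A_3$. Writing $\bb_i$ for the gradient of $\ba_i$, one reads off from $\breve\Phi(G,S)$ that $\bb_0=-\epsilon_1-\epsilon_2$ is orthogonal to $\bb_1=\epsilon_1-\epsilon_2$ and to each $\bb_j$ with $j\ge 3$, while $\langle\bb_0,\bb_2\rangle$ is of type $A_2$; hence in the Coxeter system $\bs_0$ and $\bs_1$ commute, $\bs_0\bs_2$ and $\bs_1\bs_2$ have order $3$, and $\bs_0$ commutes with $\bs_3,\dots,\bs_r$ (this is exactly where $r\ge 3$ is used, so that $\{\bs_0,\bs_1,\bs_2\}$ closes up as an $A_3$ without interference from $\bs_r$). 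So it suffices to prove that no $m(\bs_0),m(\bs_1),m(\bs_2)\in N_G(S)(\bF)$ lifting $\bs_0,\bs_1,\bs_2$ can simultaneously satisfy $m(\bs_0)m(\bs_1)=m(\bs_1)m(\bs_0)$, $m(\bs_0)m(\bs_2)m(\bs_0)=m(\bs_2)m(\bs_0)m(\bs_2)$, and $m(\bs_1)m(\bs_2)m(\bs_1)=m(\bs_2)m(\bs_1)m(\bs_2)$.

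Any such lift has the form $m(\bs_i)=t_i\,n_{\bs_i}$ with $t_i\in T(\bF)_1$ and $n_{\bs_i}:=n_{s_{\ba_i}}$ the representatives of \S\ref{sec:repsASR}. I use $T\cong(\Res_{\tF/\bF}\BG_m)^r$, so that $T(\bF)=(\tF^\times)^r$ with $\epsilon_i$ the $i$-th coordinate, $T(\bF)_1=(\fkO_\tF^\times)^r$, and the induced action of $\bs_0,\bs_1,\bs_2$ on $T(\bF)$ permutes and inverts the first three coordinates according to $\bb_0,\bb_1,\bb_2$. By the circular-ordering analysis of \S\ref{sec:CO} (the pairs $(\bb_0,\bb_2)$ and $(\bb_1,\bb_2)$ span $A_2$-systems), Lemma~\ref{lem:CR} shows that the standard representatives already satisfy the two order-$3$ relations $n_{\bs_0}n_{\bs_2}n_{\bs_0}=n_{\bs_2}n_{\bs_0}n_{\bs_2}$ and $n_{\bs_1}n_{\bs_2}n_{\bs_1}=n_{\bs_2}n_{\bs_1}n_{\bs_2}$. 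Substituting $m(\bs_i)=t_in_{\bs_i}$ and cancelling these identities turns the two order-$3$ relations into explicit identities in $T(\bF)_1$; a direct computation in coordinates shows that together they force $b_1b_2=a_2/a_1$, where $t_0=(a_i)_i$ and $t_1=(b_i)_i$.

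The only relation the standard representatives fail is the commutation $\bs_0\bs_1=\bs_1\bs_0$, and the argument hinges on computing the defect $c:=n_{\bs_0}n_{\bs_1}n_{\bs_0}^{-1}n_{\bs_1}^{-1}\in T(\bF)_1$. Writing $n_{\bs_0}=z\,n_{s_{\bb_0}}$ with $z=n_{\bb_0^\vee}$ the translation part of \S\ref{sec:rasr}, one first checks that $z$ drops out: since $\bs_1$ fixes $\bb_0^\vee=-\epsilon_1^\vee-\epsilon_2^\vee$ we have ${}^{\bs_1}z=z$, whence $c$ equals the finite commutator $n_{s_{\bb_0}}n_{s_{\bb_1}}n_{s_{\bb_0}}^{-1}n_{s_{\bb_1}}^{-1}$. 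Now $\bb_0$ and $\bb_1$ are orthogonal but \emph{not} strongly orthogonal: $\bb_0+\bb_1=-2\epsilon_2$ and $\bb_0-\bb_1=-2\epsilon_1$ are (long) roots of $\breve\Phi(G,S)$, so $[U_{\bb_0},U_{\bb_1}]$ is nontrivial and the two representatives do not commute. Carrying out this commutator in the rank-$2$ subgroup (a quasi-split ramified $\U_4$) attached to coordinates $1,2$, using the Chevalley--Steinberg system of \S\ref{Sec:Affpin}, gives $c=\bb_1^\vee(-1)=\epsilon_1^\vee(-1)\epsilon_2^\vee(-1)$; in particular the $\epsilon_2$-coordinate of $c$ is $-1$. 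This computation is the main obstacle, and it is exactly where the even-unitary hypothesis enters: for a split group the orthogonal pair occurring in the affine $C_r$ diagram consists of long roots and is strongly orthogonal, so the analogous defect is trivial, and for odd unitary groups the relevant rank-$2$ configuration is different.

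Finally I combine the three relations. Moving the $t_i$ past the $n_{\bs_i}$ and using $n_{\bs_0}n_{\bs_1}=c\,n_{\bs_1}n_{\bs_0}$, the commutation relation $m(\bs_0)m(\bs_1)=m(\bs_1)m(\bs_0)$ is equivalent to the identity $c=(t_0^{-1}\cdot{}^{\bs_1}t_0)(t_1\cdot{}^{\bs_0}t_1^{-1})$ in $T(\bF)_1$. Reading off the $\epsilon_2$-coordinate of the right-hand side and inserting the constraint $b_1b_2=a_2/a_1$ produced by the two order-$3$ relations, that coordinate collapses to $(a_1/a_2)\cdot b_1b_2=1$. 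But the $\epsilon_2$-coordinate of $c$ is $-1$, so we would need $-1=1$ in $\fkO_\tF^\times$, a contradiction. Hence no compatible representatives $m(\bs_i)$ exist, which proves the proposition. The contradiction lives entirely in the $\epsilon_2$-coordinate and so is insensitive to the residue characteristic; the $\epsilon_1$-coordinate, whose analogous constraint would require $\sqrt{-1}\in\fkO_\tF^\times$, plays no role.
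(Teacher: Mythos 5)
Your overall strategy --- restrict to the three relations among $\bs_0,\bs_1,\bs_2$, write an arbitrary lift as $m(\bs_i)=t_i n_{\bs_i}$ with $t_i\in T(\bF)_1$, and derive incompatible multiplicative constraints on the entries of the $t_i$ --- is exactly the paper's. But the central computation contains a genuine error. You claim the translation part $z=n_{\bb_0^\vee}$ drops out of the commutator $n_{\bs_0}n_{\bs_1}n_{\bs_0}^{-1}n_{\bs_1}^{-1}$ because $s_{\bb_1}$ fixes $\bb_0^\vee=-\epsilon_1^\vee-\epsilon_2^\vee$. That is true of the coweight, but not of its chosen lift: by \S\ref{sec:rasr} one has $z=\Nm_{\tF/\bF}\tb_*^\vee(\varpi_\tF)=\diag(\varpi_\tF^{-1},\tau(\varpi_\tF)^{-1},1,\dots,1,\varpi_\tF,\tau(\varpi_\tF))$, and conjugation by $n_{\bs_1}$ swaps the first two (and last two) diagonal entries, so ${}^{\bs_1}z\cdot z^{-1}=\diag(\varpi_\tF\tau(\varpi_\tF)^{-1},\,\varpi_\tF^{-1}\tau(\varpi_\tF),1,\dots)$, which is a nontrivial unit precisely because $\tF/\bF$ is ramified. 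Hence the defect $c$ is not $\bb_1^\vee(-1)$: it carries the ratio $\varpi_\tF/\tau(\varpi_\tF)$. The same ratio also survives in the braid relation $m(\bs_0)m(\bs_2)m(\bs_0)=m(\bs_2)m(\bs_0)m(\bs_2)$ (compare the paper's equation $d_{22}d_{32}=u\varpi_\tF^{-1}\tau(\varpi_\tF)$), so your constraint $b_1b_2=a_2/a_1$ is missing this factor as well.

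As a consequence your final contradiction has the wrong shape. When one keeps track of $z$ under conjugation, the three relations combine to force $\tau(d_{11}\varpi_\tF)=d_{11}\varpi_\tF$ for some unit $d_{11}$, i.e.\ that a uniformizer of $\tF$ is $\tau$-fixed --- impossible for a ramified quadratic extension in every characteristic; this is the paper's argument. Your contradiction ``$-1=1$ in $\fkO_\tF^\times$'' is not supported by the corrected computation, and it would in any case be vacuous when $\charac F=2$, where ramified (wildly ramified) quadratic extensions certainly occur; so the closing claim that the argument is insensitive to the characteristic is also not right. The reduction to $\bs_0,\bs_1,\bs_2$ and the use of Lemma~\ref{lem:CR} for the two order-$3$ relations are fine; the proof is repaired by tracking $z$ throughout, at which point one recovers the paper's equations and its uniformizer contradiction.
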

\begin{proof} For $\sigma$ a permutation in the symmetric group $S_{2r}$, let $g_\sigma$ denote the corresponding permutation matrix in $\GL_{2r}(\tF)$ whose entries are all 0 or 1. Recall that we have chosen an extra special vertex $\bv_0 \in \CA(S,\bF)$, and a Chevalley-Steinberg system whose associated valuation is $\breve v_0$. This yields representatives $\{n_{\bs_i}\;|\; 0 \leq i \leq r\}$ as in equations \eqref{Rep1}, \eqref{Rep2}, and \eqref{def:affinerep}. With this choice of the Chevalley-Steinberg system we have, up to elements of order 2 in $T(\bF)_1$, that 
\begin{align*}
    n_{\bs_0} &= n_{\bb^\vee} g_{(1\; 2r-1)(2\; 2r)} \mod T(\bF)_1\\
    n_{\bs_i} &= g_{(i\; i+1)(2r-i\; 2r-i+1)} \mod T(\bF)_1, 1 \leq i \leq r-1,\\
    n_{\bs_r} &= g_{(r\; r+1)} \mod T(\bF)_1.
    \end{align*}
  The definition of $n_{\bb^\vee}$ is given in Section \ref{sec:rasr}; it depends on a choice $\tb^\vee \in X_*(T_{\sc})$ and a uniformizer $\varpi_\tF$ of $\tF$. In the case at hand, we have $\bb^\vee = -\epsilon_1^* - \epsilon_2^*$. We may choose $\tilde b^\vee = -\tilde \epsilon_1^* + \tilde \epsilon_{2r-1}^* \in \tilde\Phi(G,T)$, a uniformizer $\varpi_\tF$ and take $n_{\bb^\vee} = \Nm_{\tF/\bF} \tilde b^\vee(\varpi_\tF) = \diag(\varpi_\tF^{-1}, \tau(\varpi_\tF)^{-1}, 1,1, \cdots, 1, \varpi_{\tF}, \tau(\varpi_\tF))$.

    Suppose there exist representatives $m(\bs_i) \in G(\bF)$ of $\bs_i, 0 \leq i \leq r$, such that $\{m(\bs_i)\;|\; 0 \leq i \leq r\}$ that satisfy all the Coxeter relations. Then, for each $0 \leq i \leq r$,  $m(\bs_i)$ differs from $n_{\bs_i}$ by an element of $T(\bF)_1$.  For  $t_i \in T(\bF)$, write $t_i = \diag(d_{1i}, d_{2i}, \cdots, d_{ri}, \tau(d_{ri})\i, \cdots,   \tau(d_{2i})\i, \tau(d_{1i})\i)$, with $ d_{ji} \in \tF^\times$. Such a $t_i$ lies in $T(\bF)_1$ if and only if each $d_{ji} \in \fkO_{\tF}^\times$.

    Combining these observations, we see that  $m(\bs_0) = t_0 \cdot g_{(1\; 2r-1)(2\; 2r)}$ where $t_0 = (\varpi_\tF^{-1}, u\varpi_{\tF}^{-1}, \cdots,  d_{r0}, \tau(d_{r0})\i, \cdots, \tau(u^{-1}\varpi_\tF), \tau(\varpi_\tF))$ for a suitable (and possibly different) uniformizer $\varpi_\tF$ of $\tF$. 
   Also
   \begin{align*}
       m(\bs_1) &= t_1 g_{(1\; 2)(2r-1\; 2r)} &\text{ where } d_{j1} \in \fkO_\tF^\times \text{ for } 1 \leq j \leq r,\\
       m(\bs_2) &= t_2 g_{(2\; 3)(2r-2\; 2 r-1)} &\text{ where } d_{j2} \in \fkO_\tF^\times \text{ for } 1 \leq j \leq r,\\
       &\vdots\\
        m(\bs_{r-1}) &= t_{r-1} g_{(r-1\; r)(r+1\; r+2)} &\text{ where } d_{j\; r-1} \in \fkO_\tF^\times \text{ for } 1 \leq j \leq r,\\
       m(\bs_r) &= t_r g_{(r\; r+1)} & \text{ where } d_{jr} \in \fkO_\tF^\times \text{ for } 1 \leq j \leq r.\\
   \end{align*}
The Coxeter relation $m(\bs_0) m(\bs_1) = m(\bs_1) m(\bs_0)$ implies that
   \begin{align}\label{(1)}
       u\tau(u) = 1 \text{ and } \tau(d_{21})\i = u d_{11}.
   \end{align}
      The Coxeter relation $m(\bs_0) m(\bs_2) m(\bs_0) = m(\bs_2) m(\bs_0) m(\bs_2)$ implies that
   \begin{align}\label{(2)}
       d_{22}d_{32} = u \varpi_\tF^{-1} \tau(\varpi_\tF) \text{ and } d_{12} \tau( d_{30})=1.
   \end{align}
      The Coxeter relation $m(\bs_1) m(\bs_2) m(\bs_1) = m(\bs_2) m(\bs_1) m(\bs_2)$ implies that
   \begin{align}\label{(3)}
      d_{31} = d_{12} \text{ and } d_{11}d_{21}d_{31}^{-1} = d_{12}^{-1}d_{22}d_{32}.
   \end{align}

   Hence    
   \[ \tau(u)^{-1} d_{11}\tau(d_{11})\i = d_{11}d_{21} = d_{22}d_{32} = u \varpi_\tF \i\tau(\varpi_\tF)\]
   Since $\tau(u) u = 1$, this implies that 
   \[d_{11}\tau(d_{11})\i = \varpi_\tF \i \tau(\varpi_\tF),\]
which implies that 
\[\tau(d_{11} \varpi_\tF) = d_{11} \varpi_{\tF}.\]
Since $d_{11}$ is a unit in $\tF$, $d_{11} \varpi_\tF$ is a uniformizer of $\tF$ and hence cannot be fixed by $\tau$.   This gives a contradiction and proves that there do not exist representatives $m(\bs_i), 0 \leq i \leq r$, that satisfy all the Coxeter relations.   
\end{proof}

\smallskip
    \subsection{On the squares of the representatives}\label{sec:squares}
    In this section, we calculate $n_{s_\ba}^2, \ba \in \breve\Delta$. Given \eqref{eq:rfsr2}, we only need to calculate $n_{s_\ba}^2, \ba \in \breve\Delta\backslash \breve\Delta_0$. Let $\bb, \bb_*, n_{s_\ba}$ be as in \S \ref{sec:rasr}. We have
    \[n_{s_\ba}^2 = n_{\bb^\vee} s_\bb(n_{\bb^\vee}) \cdot  n_{s_{\bb}}^2. \]
    Since $ n_{s_\bb}^2$ is calculated as in \eqref{eq:rfsr2}, it remains to calculate $n_{\bb^\vee} s_\bb(n_{\bb^\vee})$.

There are two possibilities.
\begin{enumerate}
    \item Suppose $ \bb_* =\bb$, that is $2 \bb_*$ is not a root. We fix a pinning $(\bF_\tb, x_\bb)$ as in \cite[\S 4.1.5 and \S 4.1.8]{BT2}. Here $\bF_\tb \hookrightarrow \tF$ and $x_\bb: {\bf U}_\bb \xrightarrow{\simeq} \Res_{\bF_\tb/\bF}\BG_a$ is an isomorphism. Then $s_\bb = \prod \gamma(s_{\tb})$ where $\tb \in X^*(T)$ is as in \S \ref{sec:rasr}, and $\gamma$ runs through the set of $\bF$-embeddings $\bF_\tb \hookrightarrow \tF$. Note that $\tb|_S = \bb$. 
    Then
    \begin{align*}
        s_\bb(n_{\bb^\vee}) &= s_\bb(n_{\bb^\vee}) = \prod\gamma(s_\tb)\left(\prod \gamma'(n_{\tb^\vee})\right)\\
    \end{align*}
    It is easy to see that for $\bF$-embeddings $\gamma, \gamma'$ of $\bF_\tb \hookrightarrow \tF$,
    \[\gamma(s_\tb)(\gamma'(n_{\tb^\vee})) = \begin{cases}
        \gamma'(n_{\tb^\vee}) & if \gamma \neq \gamma'\\
        \gamma(n_{\tb^\vee}^{-1}) & if \gamma = \gamma'.
    \end{cases}\]
    Hence
    \[s_\bb(n_{\bb^\vee})  = \prod \gamma(n_{\tb^\vee}^{-1}) = n_{\bb^\vee}^{-1}.\]
    This implies that $n_{\bb^\vee} s_\bb(n_{\bb^\vee}) = n_{\bb^\vee} n_{\bb^\vee}^{-1} = 1$, so $n_{s_\ba}^2 = n_{s_\bb}^2 = \bb^\vee(-1)$  as in \eqref{eq:rfsr2}.
    
    \item Suppose $ \bb_* \neq \bb$, that is $2 \bb_*$ is a root. Recall that we have fixed $(\bF_{\tb_*}, \bF_{\tb_*+\tb_*'}, x_{\bb_*})$ as in Section \ref{Sec:Affpin} (see\cite[\S 4.1.9]{BT2}) where $\tb_*, \tb_*' \in \tilde\Phi(G,T)$ are such that $\tb_*, \tb_*'|_S = \bb_*$ and $\tb_* +\tb_*'$ is a root. Then $\pr(\tb_*^\vee) = \bb^\vee$. Let  $\varpi_\tF$ be a uniformizer of $\tF$. Then  $n_{\bb^\vee} = \Nm_{\tF/\bF} \tb_*^\vee(\varpi_\tF) =  \tb_*^\vee(\varpi_{\bF_{\tb_*}})$ where $\varpi_{\bF_{\tb_*}} = \Nm_{\tF/\bF_{\tb_*}} \varpi_\tF$. 
     In this case
    \[s_\bb = \prod\gamma(s_{\tb_*}\; s_{\tb_*'}\; s_{\tb_*})\]
    where $\gamma$ runs over the set of $\bF$-embeddings of $\bF_{\tb_*+\tb_*'}$ in $\tF$. 
    It is easy to calculate $s_\bb(n_{\bb^\vee})$ (in fact, we use the morphism $x_{\bb_*}$ and simply perform this calculation inside $\Res_{\bF_{\tb_*+\tb_*'}/\bF} \SU_3$, where $\SU_3$ is defined by the quadratic extension $\bF_{\tb_*}/\bF_{\tb_*+\tb_*'}$), and we see that 
    \[s_\bb(n_{\bb^\vee}) = (\Nm_{\bF_{\tb_*}/\bF} \tb_*^\vee(u)) \cdot n_{\bb^\vee}^{-1}\]
    where $u = \gamma_0(\varpi_{\bF_{\tb_*}})\varpi_{\bF_{\tb_*}}^{-1}$. Hence 
    \[n_{\bb^\vee} s_\bb(n_{\bb^\vee}) = \Nm_{\bF_{\bb_*}/\bF} \tb_*^\vee(u)\]
    so
    \[n_{s_\ba}^2 = \Nm_{\bF_{\bb_*}/\bF} \tb_*^\vee(u) \cdot n_{s_{\bb}}^2 = \Nm_{\bF_{\bb_*}/\bF} \tb_*^\vee(u)\]
    where the last equality above follows from \eqref{eq:rfsr2}. 

    \end{enumerate}
    \begin{remark}\label{rem:tameodd}
        If the extension $\bF_{\tb_*}/\bF_{\tb_* +\tb_*'}$ is tamely ramified and if the uniformizer $\varpi_{\bF_{\tb_*}}$ satisfies $\gamma_0(\varpi_{\bF_{\tb_*}})= -\varpi_{\bF_{\tb_*}}$, then $u=-1$ and $n_{s_\ba}^2 = \Nm_{\bF_{\bb_*}/\bF} \tb_*^\vee(-1) = \bb^\vee(-1)$ is an element of order 2 and lies in $\breve S_2$. 
    \end{remark}
    \begin{theorem}\label{thm:TitsbF}
Suppose $G$ is a connected, reductive group over $F$ such that $G_\bF^\der$ does not contain a simple factor of unitary type. There exist a set of representatives $\{n_{s_\ba}\;|\; \ba \in \breve\Delta\}$ that
\begin{enumerate}
    \item satisfy Coxeter relations,
    \item $n_{s_\ba}^2$ is an element of order 2 and lies in $\breve S_2$.
\end{enumerate} 
Let $\breve\CT_\af$ be the subgroup of $G(\bF)$ generated by $n_{s_\ba}, \ba \in \breve\Delta$. Then $\breve\CT_\af$ contains $\breve S_2$ and is a Tits group of $\bW_\af$ over $\bF$.
\end{theorem}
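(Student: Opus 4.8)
The plan is to assemble the statement from Corollary~\ref{cor:Coxeter} (for the Coxeter relations) and from the computation of \S\ref{sec:squares} (for the squares), and then to verify the axioms of Definition~\ref{def:tits} directly. As in the proof of Corollary~\ref{cor:Coxeter}, since only $\bW_\af$ is involved I would first reduce to the case where $G_\bF$ is almost $\bF$-simple and simply connected, treating each factor separately, so that $\breve\Phi(G,S)$ is irreducible.

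For item (1) I would observe that the hypothesis that $G_\bF^\der$ has no simple factor of unitary type is stronger than the hypothesis of Corollary~\ref{cor:Coxeter} (no even unitary factor); hence that corollary applies and the elements $n_{s_\ba}$, $\ba\in\breve\Delta$, satisfy all Coxeter relations.

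For item (2) I would split $\breve\Delta=\breve\Delta_0\sqcup(\breve\Delta\setminus\breve\Delta_0)$. For $\ba\in\breve\Delta_0$, \eqref{eq:rfsr2} gives $n_{s_\ba}^2=\ba^\vee(-1)$. For $\ba\in\breve\Delta\setminus\breve\Delta_0$ with gradient $\bb$, \S\ref{sec:squares} gives $n_{s_\ba}^2=\bb^\vee(-1)$ in case~(1), where $2\bb_*$ is not a root, and the norm expression $\Nm_{\bF_{\bb_*}/\bF}\tb_*^\vee(u)$ in case~(2), where $2\bb_*$ is a root. The key point is to exclude case~(2): a gradient $\bb$ with $2\bb_*$ a root means $\breve\Phi(G,S)$ is non-reduced (of type $BC$) on the ambient factor, which by \cite[\S 4.2.23]{BT2} happens precisely when that factor is of odd unitary type, and this is excluded by hypothesis. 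Hence $\breve\Phi(G,S)$ is reduced, so $2\ba$ (resp.\ $2\bb$) is never a root; by \eqref{eq:rfsr2} every square then equals $\ba^\vee(-1)$ (resp.\ $\bb^\vee(-1)$), an element of $\breve S_2$, and of order exactly $2$ precisely because $2\ba$ is not a root.

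Finally, to see that $\breve\CT_\af$ is a Tits group I would define the cross-section $m(\bw):=n_{s_{i_1}}\cdots n_{s_{i_n}}$ for a reduced expression $\bw=\bs_{i_1}\cdots\bs_{i_n}$. This is well defined by Matsumoto's word property---any two reduced expressions differ by braid moves, and the braid relations are exactly the Coxeter relations established in (1)---which gives axiom (2)(b), while axiom (2)(a) is item (2). For the exact sequence I would first show $\breve S_2\subseteq\breve\CT_\af$: writing $\bW_0=\langle s_\ba: \ba\in\breve\Delta_0\rangle\subseteq\bW_\af$ and using that $\bW_0$ acts transitively on the roots of each length, every $\breve b^\vee(-1)$ is $\bW_0$-conjugate to some $\ba^\vee(-1)=n_{s_\ba}^2$ with $\ba\in\breve\Delta_0$, so conjugating by the corresponding $m(\bw)\in\breve\CT_\af$ produces $\breve b^\vee(-1)\in\breve\CT_\af$; since each $\breve b^\vee(-1)$ lies in $T(\bF)$ it is killed by $\breve\phi$, whence $\breve S_2\subseteq\ker\breve\phi$. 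Conversely, since $\breve\CT_\af$ normalizes $\breve S_2$ (the $n_{s_\ba}$ normalize $T$ and permute the $\breve b^\vee(-1)$) and $n_{s_\ba}^2\in\breve S_2$, every element of $\breve\CT_\af$ can be written $s\cdot m(\bw)$ with $s\in\breve S_2$; if it lies in $\ker\breve\phi$ then $\bw=1$ and it equals $s\in\breve S_2$. This yields $1\to\breve S_2\to\breve\CT_\af\xrightarrow{\breve\phi}\bW_\af\to1$ and completes the verification. Given Corollary~\ref{cor:Coxeter} and \S\ref{sec:squares}, the two genuinely delicate steps are the reducedness reduction eliminating case~(2) and the normal-form bookkeeping showing $\ker\breve\phi$ is no larger than $\breve S_2$; I expect the former---correctly tying ``no unitary factor'' to the absence of $BC$-type relative roots---to be the main obstacle.
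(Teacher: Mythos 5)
Your proposal is correct and follows essentially the same route as the paper: the paper likewise deduces (1) from Corollary~\ref{cor:Coxeter}, deduces (2) from \S\ref{sec:squares} (the exclusion of the divisible-gradient case being exactly the observation that non-reduced relative roots occur only for odd unitary factors), and then cites \cite[Proposition 5.3]{GH23} for the verification that $\breve\CT_\af$ is a Tits group, which is precisely the Matsumoto/normal-form/$\bW_0$-conjugacy argument you write out explicitly.
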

\begin{proof}
    The existence of a set of representatives $\{n_{s_\ba}\;|\; \ba \in \breve\Delta\}$ that satisfies (1) and (2) follows from Corollary \ref{cor:Coxeter} and \S \ref{sec:squares}. The proof that $\breve\CT_\af$ is a Tits group of $\bW_\af$ over $\bF$ is identical to \cite[Propsition 5.3]{GH23}. 
\end{proof}
\begin{remark}\label{rem:TitsOdd}
    Let $G = U_{2r+1}$ be the odd unitary group associated with a ramified quadratic extension $\tF/\bF$. In this case, we know that the set of representatives $\{n_{s_\ba}\; |\; \ba \in \breve\Delta\}$ of the affine simple reflections constructed in \eqref{Rep1}, \eqref{Rep2'}, and \eqref{def:affinerep} satisfy Coxeter relations. 
    
    Supose $\tF/\bF$ is tamely ramified. Since it is then always possible to choose a uniformizer $\varpi_L$ of $L$ such that $\gamma_0(\varpi_\tF) = -\varpi_\tF$, we have by Remark \ref{rem:tameodd} that $n_{s_\ba}^2 \in \breve S_2$ for all $\ba \in \breve\Delta$, so a Tits group $\breve\CT_\af$ of $\bW_\af$ exists in this case. 
    
    Now, suppose $\tF/\bF$ is wildly ramified. In this case, there does not exist a uniformizer $\varpi_\tF$ of $\tF$ such that $\gamma_0(\varpi_\tF) = -\varpi_\tF$. Hence the representative $n_{s_\ba}$ of the affine simple reflection $s_\ba$ constructed in \eqref{def:affinerep} is not generally of order 4. One could still ask if there exist representatives $\{m(s_\ba) \in G(\bF)\;|\; \ba \in \breve\Delta\}$, perhaps arising out of a different construction, such that the group generated by these $m({s_\ba})$'s yields a Tits group of $\bW_\af$. Note that if $m(s_\ba)$ is a representative in $G(\bF)$  of $s_\ba$ then $m(s_\ba) = t_\ba n_{s_\ba}$ for some $t_\ba \in T(\bF)_1$. Consider  $\ba \in \breve\Delta \backslash \breve\Delta_0$. Then $m(s_\ba)^2 = t_\ba s_\ba(t_\ba) n_{s_\ba}^2$. Write $t_\ba = \diag(u_1, u_2, \cdots u_n, u, \gamma_0(u_n)^{-1}, \cdots \gamma_0(u_1)^{-1})$ for suitable $u_i, u \in \fkO_\tF^\times$. If $m(s_\ba)^2$ has order 2,  the calculation in case (2) of \S\ref{sec:squares} implies that $u_1^{-1} \gamma_0(u_1)= \pm \gamma_0(\varpi_\tF)\varpi_\tF^{-1}$, but this means that $\gamma_0(u_1^{-1}\varpi_\tF) = \pm u_1^{-1}\varpi_\tF$. Since $u_1^{-1} \varpi_\tF$ is a uniformizer of $\tF$, this condition cannot be satisfied when $\tF/\bF$ is wildly ramified.  This implies that when $\tF/\bF$ is wildly ramified, $\breve\CT_\af$ does not exist. 
\end{remark}

\section{Tits groups of affine Weyl groups over $F$}\label{sec:descend}
The goal of this section is to descend the construction of the Tits group of the affine Weyl group over $\bF$ down to $F$. 
\subsubsection{} \label{sec:IndTor}
 Let $T$ be an induced torus over $F$. Then there exist representatives $\{n_{\breve \lambda}\;|\; {\breve \lambda} \in X_*(T)_{\Gamma_0}\}$ that forms a group and is $\sigma$-stable. To see this, we may reduce ourselves to the case where $T = \Res_{L/F} \BG_m$ with $L/F$ a finite separable extension of degree $n$. Let $\tL$ be the Galois closure of $L$ in $F_s$. The cocharacter lattice $X_*(T)$ admits a $\BZ$-basis $\CB: = \{\tilde\lambda_1, \cdots , \tilde\lambda_n\}$ that is permuted transitively by $\Gamma$. We may and do assume that the fixed field of $\tilde\lambda_1$ is $L$. Fix a uniformizer $\varpi_L$ of $L$. 
Let $\tF = \tL \bF$.  Choose a uniformizer $\varpi_\tF$ such that $\Nm_{\tF/L\bF} \varpi_\tF = \varpi_L$ (see \cite[ Chapter V, \S 4,  Proposition 7]{Ser79}). Let ${\breve \lambda}_1 = \pr(\tilde\lambda_1)$. Then with $f = [L\bF:\bF]$, we have that $\{\sigma^{i-1}({\breve \lambda}_1)\;|\; 1 \leq i \leq f\}$ is a $\BZ$-basis of $X_*(T)_{\Gamma_0}$. Define $n_{{\breve \lambda}_1} = \Nm_{\tF/\bF} \tilde\lambda_1(\varpi_\tF) = \Nm_{L\bF/\bF} \tilde\lambda_1(\varpi_L)$. We claim that $\sigma^f$ fixes $n_{{\breve \lambda}_1}$. Let $\tilde\sigma$ be any lift of $\sigma$ to $\Gamma$. Since $\sigma^f$ fixes ${\breve \lambda}_1$ ( in fact, $\sigma^f$ acts as identity on $X_*(T)_{\Gamma_0}$), we have that $\tilde\sigma^f(\tilde\lambda_1) = \gamma(\tilde\lambda_1)$ for a suitable $\gamma \in \Gamma_0$. Then $\gamma^{-1} \tilde\sigma^f$ fixes $\tilde\lambda_1$, so $\gamma^{-1} \tilde\sigma^f \in \Gal(F_s/L)$. Hence $\sigma^f(n_{{\breve \lambda}_1}) = \Nm_{\tF/\bF} \tilde\sigma^f(\tilde\lambda_1(\varpi_\tF)) = \Nm_{L\bF/\bF} \tilde\lambda_1(\gamma^{-1}\tilde\sigma^f(\varpi_L)) = \Nm_{L\bF/\bF} \tilde\lambda_1(\varpi_L) =n_{{\breve \lambda}_1}$. Now, suppose ${\breve \lambda} \in X_*(T)_{\Gamma_0}$. Then ${\breve \lambda}$ can be uniquely written as ${\breve \lambda} = \sum_i c_i \sigma^{i-1}({\breve \lambda}_1)$. Define $n_{{\breve \lambda}} = \prod_i \sigma^{i-1}(n_{{\breve \lambda}_1})^{c_i}$. The set $\{n_{{\breve \lambda}}\;| {\breve \lambda} \in X_*(T)_{\Gamma_0}\}$ clearly forms a group. The fact that this set of representatives is $\sigma$-stable follows from the fact that $\sigma^f(n_{{\breve \lambda}_1}) = n_{{\breve \lambda}_1}$. 

Suppose $G$ is quasi-split over $F$. Then the torus $T_\Sc$ is induced (see \cite[Section 4.4.16]{BT2}). The above discussion then says that we may choose $\{n_{\bb^\vee} \in T_\Sc(\bF)\;|\; \bb^\vee \in X_*(T_{\Sc})_{\Gamma_0}\}$ that forms a group and is $\sigma$-stable. This observation will be used in the proof of Proposition \ref{prop:descent}.

We now come to the main result of this section.
   \begin{proposition}\label{prop:descent}
Suppose $G$ is a connected, reductive group over $F$ such that $G_\bF^\der$ does not contain a simple factor of even unitary type. There exists a set of representatives $\{n_{s_\ba}\;|\; \ba \in \breve\Delta\}$ that
\begin{enumerate}
\item is $\sigma$-stable,
    \item satisfy all the Coxeter relations.
\end{enumerate} 
\end{proposition}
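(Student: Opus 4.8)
The plan is to reuse the explicit representatives $n_{s_\ba}$, $\ba \in \breve\Delta$, built in \S\ref{sec:repsASR} via \eqref{Rep1}, \eqref{Rep2'}, \eqref{Rep2} and \eqref{def:affinerep}, and simply to make every auxiliary choice entering their construction $\sigma$-equivariant. Since the alcove $\bal$ is $\sigma$-stable, $\sigma$ permutes its walls and hence acts on $\breve\Delta$, preserving the splitting $\breve\Delta = \breve\Delta_0 \sqcup (\breve\Delta \setminus \breve\Delta_0)$; thus asking that the set $\{n_{s_\ba}\}$ be $\sigma$-stable is exactly asking for the identities $\sigma(n_{s_\ba}) = n_{s_{\sigma(\ba)}}$ for all $\ba \in \breve\Delta$. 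The Coxeter relations are not at issue: they hold for these same representatives by Corollary \ref{cor:Coxeter}, whose hypothesis is precisely the one imposed here. So the entire content of the proposition is the $\sigma$-equivariance, and the argument is the evident generalization of \cite[Proposition 6.1]{GH23} from the unramified to the quasi-split setting.

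First I would fix the base data $\sigma$-equivariantly. As $G$ is defined over $F$ and the apartment $\CA(S,\bF)$, the alcove $\bal$, and the Weyl chamber cutting out $\breve\Delta_0$ are all $\sigma$-stable (and $\bv_0$ can be chosen $\sigma$-fixed, as in \cite{GH23}), the associated Borel--torus pair over $\bF$ is $\sigma$-stable, and one may choose the Chevalley-Steinberg system $(x_\ta)_\ta$ of \S\ref{Sec:Affpin} compatibly with the full Galois action, so that $\sigma\bigl(x_\ba(u)\bigr) = x_{\sigma(\ba)}(\sigma(u))$. This extends the $\Gal(\tF/\bF)$-compatibility already required there. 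With such a pinning the representatives attached to $\ba \in \breve\Delta_0$ transform correctly: if $2\ba$ is not a root this is immediate from $\sigma(1)=1$ in \eqref{Rep1}; if $2\ba$ is a root, one chooses the element $c$ with $c\gamma_0(c)=2$ of \eqref{Rep2'} on a single representative of each $\sigma$-orbit in $\breve\Delta_0$ and propagates it by $\sigma$, which is legitimate because $\sigma$ normalises $\gamma_0$. The same reasoning applied to the gradient roots $\bb_*$ gives $\sigma(n_{s_\bb}) = n_{s_{\sigma(\bb)}}$ for every gradient $\bb$ occurring in \eqref{def:affinerep}.

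It remains to treat the torus factor in \eqref{def:affinerep}, where $n_{s_\ba} = n_{\bb^\vee}\cdot n_{s_\bb}$ for $\ba \in \breve\Delta \setminus \breve\Delta_0$. Since $G_\bF$ is quasi-split the torus $T_\Sc$ is induced, so by the discussion in \S\ref{sec:IndTor} we may take the family $\{n_{\bb^\vee} \in T_\Sc(\bF)\}$ to be $\sigma$-stable, that is $\sigma(n_{\bb^\vee}) = n_{\sigma(\bb)^\vee}$. Combining this with the previous step yields
\[
\sigma(n_{s_\ba}) = \sigma(n_{\bb^\vee})\,\sigma(n_{s_\bb}) = n_{\sigma(\bb)^\vee}\,n_{s_{\sigma(\bb)}} = n_{s_{\sigma(\ba)}},
\]
so the full set $\{n_{s_\ba}\;|\;\ba\in\breve\Delta\}$ is $\sigma$-stable. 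Together with Corollary \ref{cor:Coxeter} this establishes both assertions.

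The step I expect to require the most care --- and the only input genuinely beyond \cite{GH23} --- is showing that these $\sigma$-equivariant choices can all be made at once: that the Chevalley-Steinberg system can be taken $\sigma$-stable, that the elements $c$ can be fixed orbit by orbit, and that the uniformizer $\varpi_\tF$ feeding $n_{\bb^\vee}$ is the $\sigma$-compatible one furnished by \S\ref{sec:IndTor}. What makes this routine is that the two families of choices are essentially decoupled: the reflection factors $n_{s_\bb}$ depend only on the pinning and involve no uniformizer, while the entire uniformizer dependence is carried by the torus factors $n_{\bb^\vee}$, which are governed exclusively by \S\ref{sec:IndTor}. Hence no compatibility is needed beyond what is already in place, and the construction goes through as in \cite[Proposition 6.1]{GH23}.
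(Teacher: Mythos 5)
Your proposal has a genuine gap: it assumes throughout that $G$ itself behaves as if it were quasi-split over $F$, whereas the hypothesis only guarantees (via Steinberg's theorem) that $G_\bF$ is quasi-split. Two of your key claims fail for inner forms. First, $\sigma$ need \emph{not} preserve the decomposition $\breve\Delta = \breve\Delta_0 \sqcup (\breve\Delta\setminus\breve\Delta_0)$: for $G = D^\times$ with $D$ a division algebra of degree $n$, the Frobenius $\sigma$ rotates the affine Dynkin diagram of type $\tilde A_{n-1}$ and sends the affine simple root into $\breve\Delta_0$; there is no $\sigma$-fixed special vertex. Second, and for the same reason, one cannot ``choose the Chevalley--Steinberg system compatibly with the full Galois action'': the Frobenius defining the $F$-form is a twisted Frobenius of the quasi-split group $G_\bF$ and in general preserves no pinning. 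The paper explicitly flags that $\sigma$ stabilizes $\breve\Delta_0$ only \emph{after} establishing quasi-splitness of $G$ over $F$, which it does only in the multipliable case. So your reduction of the whole proposition to ``make every auxiliary choice $\sigma$-equivariant'' does not get off the ground in the non-quasi-split case, which is the main case of interest.

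The paper's argument is structured quite differently to get around this. It works $\sigma$-orbit by $\sigma$-orbit in $\breve\Delta$. For an orbit $\CX$ of size $k$ with $2\bb_*$ not a root, it does not equivariantize the pinning; instead it uses the compatibility of the Bruhat--Tits filtrations of root groups under descent (\cite[Proposition 5.1.19]{BT2} applied to $U_b(F)_{v,r}$ versus the product decomposition \eqref{ubr}) to produce a unit $u \in \fkO_{\bF_\tb}^\times$ with $\sigma^k(x_\ba(u)) = x_\ba(u)$, then rescales the pinning by $u$ on one representative of the orbit and propagates by $\sigma$; this is the genuinely new analytic input beyond \cite[Proposition 6.1]{GH23}. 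In the case where $2\bb_*$ is a root, the paper first proves that $G$ is \emph{automatically} quasi-split over $F$ (a cohomological computation showing $\left(X_*(T_{\ad})/X_*(T_{\Sc})\right)_{\Gamma_0}=1$ for odd unitary adjoint factors, hence $H^1(\Gamma, G_{\ad}(F_s))=1$), and only then runs the equivariant-pinning argument you describe. Your write-up captures the second case but omits both the first case's mechanism and the justification that the second case's hypothesis actually holds.
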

\begin{proof}
Let $\CX$ be a $\s$-orbit in $\breve\Delta$ such that $\bW_\CX$ is finite. Fix $\ba \in \CX$ and let $\bb$ be the gradient of $\ba$. Since $W_{\CX}$ is finite, we have $\bw_{\CX} \in \bW_{\CX}^\s$. In particular, $W_\CX^\sigma \neq 1$. Thus $\bb|_A \neq 0$ and hence is a root $b$ in $\Phi(G,A)$. 
We have two possibilities.\\\\
\noindent (I) Suppose $\bb_*  = \bb$, that is $2\bb_*$ is not a root. The argument in this case is a simple modification of \cite[Proposition 6.1]{GH23}. We write the details for completeness. Let $m_{\varpi_{\bF_\tb}}: \BG_a \rightarrow  \BG_a$ denote $\bF_\tb$-morphism given by multiplication by $\varpi_{\bF_\tb}$, where $\varpi_{\bF_\tb}$ is as in Section \ref{sec:rasr}. Define $x_{\ba}:= x_{\bb} \circ\Res_{\bF_\tb/\bF} m_{\varpi_{\bF_\tb}}$ for $\ba \in \breve\Delta \backslash \breve\Delta_0$, and $x_\ba = x_\bb$ for $\ba \in \breve\Delta_0$. Note that $x_{\ba}$ is a $\bF$-isomorphism from $\Res_{\bF_\tb/\bF} \BG_a$ to $U_{\bb}$.  Let $k = \# \CX$. We show that 

(a) there exists $u \in \fkO_{\bF_\bb}^\times$ such that $\sigma^k(x_\ba(u)) = x_\ba(u)$. 

Let $v \in \CA(A,F) \subset \CA(S, \bF)$ and $r \in \BR$. For $\bb \in \breve\Phi(G,S)$, let $U_\bb(\bF)_{v,r} \subset U_\bb(\bF)$ denote the filtration of root subgroup $U_{\bb}(\bF)$ as in \cite[\S 4.3]{BT2}. We recall the definition of the filtration of the root subgroup $U_b(F)$ (cf. \cite[\S 5.1.16 - 5.1.18]{BT2}). Let $\breve\Phi^b: = \{ \bc \in \breve\Phi(G,S)\;|\; \bc|_A = b \text{ or } 2b\}$. This is a $\s$-stable positively closed subset of $\breve\Phi(G,S)$; that is if $\bc_1, \bc_2 \in \breve\Phi^b$ such that $\bc+\bc'$ is a root, then $\bc+\bc' \in \breve\Phi^b$. For any fixed ordering, the subset
\begin{align}\label{ubr}
    U_b(\bF)_{v,r} := \prod_{\bc \in \breve\Phi^b, \bc|_A = b} U_\bc(\bF)_{v,r}  \prod_{\bc \in \breve\Phi^b_{nd}, \bc|_A = 2b} U_\bc(\bF)_{v,2r}
\end{align}
is a subgroup of $U_b(\bF)$. Let $U_b(F)_{v,r} := U_b(\bF)_{v,r} \cap U_b(F)$. 

We have
\begin{align}\label{ubvv0}
    U_\bb(\bF)_{v,r} = U_{\bb}(\bF)_{\bv_0, r- \bb(v - \bv_0)},
\end{align}
where $\bv_0$ is the special vertex in $\CA(S,\bF)$ fixed in \S \ref{notation}. 

The pinning $x_\ba: \Res_{\bF_\tb/\bF} \BG_a \rightarrow U_{\bb}(\bF)$ satisfies $x_{\ba}(\fkO_{\bF_\tb}) = U_\bb(\bF)_{\bv_0,r_0}$ and  $x_{\ba}(\fkp_{\bF_\tb}) = U_{\bb}(\bF)_{\bv_0,s_0}$ for a suitable $r_0<s_0$. 
Using \eqref{ubvv0},
and then adjusting $r$ if necessary, we ensure 
\begin{align}\label{xabf}
x_{\ba}(\fkO_{\bF_\tb}) = U_{\bb}(\bF)_{v,r},\;\; x_{\ba}(\fkp_{\bF_\tb}) = U_{\bb}(\bF)_{ v, r+}
\end{align}
for a suitable $r \in \BR$. In particular, $U_{\bb}(\bF)_{v,r}\neq U_{\bb}(\bF)_{v,r+}$.
Now \cite[Proposition 5.1.19]{BT2} implies  that $U_b(F)_{v,r} \neq U_b(F)_{v,r+}$. In other words, there exists $u' \in U_b(F)_{v,r}$ such that
\[ u' \notin \prod_{\bc \in \breve\Phi^b, \bc|_A = b} U_\bc(\bF)_{v,r+}  \prod_{\bc \in \breve\Phi^b_{nd}, \bc|_A = 2b} U_\bc(\bF)_{v,2r}.\]
Note that $\sigma^k$ fixes every element of $\CX$ (and hence also every element of $\Phi^b$) and $\sigma^k(u') = u'$. By \eqref{ubr}, we have
\[u' \in \prod_{\bc \in \breve\Phi^b, \bc|_A = b} U_\bc(\bF)_{v,r}^{\sigma^k}  \prod_{\bc \in \breve\Phi^b_{nd}, \bc|_A = 2b} U_\bc(\bF)_{v,2r}^{\sigma^k},\]
and 
\[u' \notin \prod_{\bc \in \breve\Phi^b, \bc|_A = b} U_\bc(\bF)_{v,r+}^{\sigma^k} \prod_{\bc \in \breve\Phi^b_{nd}, \bc|_A = 2b} U_\bc(\bF)_{v,2r}^{\sigma^k}.\]

Thus there exists $\bc \in \Phi^b, \bc|_A = b$ such that
$ U_\bc(\bF)_{v,r+}^{\sigma^k}  \subsetneq U_\bc(\bF)_{v,r}^{\sigma^k}$. Since $\bc = \sigma^i(\bb)$ for a suitable $i<k$, we also have
\[U_\bb(\bF)_{v,r+}^{\sigma^k}  \subsetneq U_\bb(\bF)_{v,r}^{\sigma^k}.\]

Let $u'' \in U_\bb(\bF)_{v,r}^{\sigma^k}\backslash U_\bb(\bF)_{v,r+}^{\sigma^k}$ and $u= x_\ba^{-1}(u'')$. Then $\sigma^k(x_\ba(u)) = x_\ba(u)$. By \eqref{xabf}, $u\in \fkO_{\bF_\bb}^\times$.

(a) is proved. 

Since $x_\ba(u) x_{-\ba}(u^{-1}) x_\ba(u) \in N_G(S)(\bF)$, we have $$x_\ba(u) \sigma^k(x_{-\ba}(u^{-1}))x_\ba(u)=\s^k(x_\ba(u)) \sigma^k(x_{-\ba}(u^{-1})) \s^k(x_\ba(u))\in N_G(S)(\bF).$$ The uniqueness assertion in \cite[\S 6.1.2, (2)] {BT2} implies that $\sigma^k(x_{-\ba}(u^{-1})) = x_{-\ba}(u^{-1})$.

Let $x_\ba' = x_\ba \circ m_u$, where $m_u$ is the multiplication by $u$. We consider the pinning $\{x_\ba', \sigma \circ x_\ba', \cdots \sigma^{k-1} \circ x_{\ba}'\}$. Then $x_{\ba}'(1) =x_{\ba}(u)$ and $x_{-\ba}'(1) = x_{-\ba}(u^{-1})$. For $\bc \in \CX$, let $n_{s_\bc}' = x_{\bc}'(1)x_{-\bc}'(1)x_{\bc}'(1)$ be the representative in $N_G(S)(\bF)$ of $s_\bc$ obtained using this pinning. Then $\sigma ^{k}(n_{s_\bc}' )= n_{s_\bc}'$ and the set $\{n_{s_\bc}'\;|\; \bc \in \CX\} = \{n_{s_\ba}', \sigma(n_{s_\ba}'), \cdots, \sigma^{k-1}(n_{s_\ba}')\}$ is $\sigma$-stable.\\

\noindent (II) Suppose $\bb_* \neq \bb$, that is $2\bb_*$ is a root. We need to show that there exists a set of representatives $\{n_{s_\ba}\;|\; \ba \in \CX\}$ that is $\s$-stable. Without loss of generality, we may assume that $G$ is almost $F$-simple and $G_\bF$ is almost $\bF$-semisimple. Write $G_{\bF,\ad} = \prod_i G_{\bF, \ad}^i$. Since $\sigma$ leaves $G_{\bF,\ad}$ stable, it permutes the factors $G_{\bF, \ad}^i$. Since $G$ is almost $F$-simple, $\sigma$ indeed permutes these factors transitively. Now, since $\bb_*$ is  multipliable, we see that one factor of $G_{\bF,\ad}$, and hence every factor of $G_{\bF, \ad}$ is, up to restriction of scalars, an odd unitary group of adjoint type. In this case, it is well-known that $G$ is quasi-split over $F$. 
It may be proved as follows. Note that the $F$-inner forms of $G$ are parametrized by the pointed cohomology set $H^1(\Gamma, G_{\ad}(F_s)).$ If we show that $H^1(\Gamma, G_{\ad}(F_s))=1$, then since any connected, reductive group over $F$ admits a unique quasi-split inner form, we see that $G$ is itself quasi-split over $F$. Using \cite[Proposition 13.1 (1)]{Kot16} and the fact that $G_{\ad}$ has trivial center, we have a canonical bijection \[ \kappa: H^1(\Gamma, G_{\ad}(F_s)) \rightarrow \left(X_*(T_{\ad})/X_*(T_{\Sc})\right)_{\Gamma}.\]
 Since $ \left(X_*(T_{\ad})/X_*(T_{\Sc})\right)_{\Gamma}\cong \left(\left(X_*(T_{\ad})/X_*(T_{\Sc})\right)_{\Gamma_0}\right)_{\Gamma/\Gamma_0}$ 
it would suffice to prove that
\begin{align}\label{H1trivial}
    \left(X_*(T_{\ad})/X_*(T_{\Sc})\right)_{\Gamma_0}=1.
\end{align}

Let $n$ odd, let $H = \U_{L/E}(n)_{\ad}$, and let $\tH = Res_{E/\bF}H$, where $L/\bF$ is finite separable, and $L/E$ is quadratic. Let $S^H$ and $T^H$ (resp. $S^\tH$ and $T^\tH$) denote the maximal $E$-split (resp. $\bF$-split) torus and maximal torus of $H$ and $\tH$ respectively. To prove \eqref{H1trivial} it would suffice to prove that $ \left(X_*(T^\tH)/X_*(T_{\Sc}^\tH)\right)_{\Gamma_0}=1$ and this would amount to proving that $ \left(X_*(T^H)/X_*(T_{\Sc}^H)\right)_{\Gal(L/E)}=1.$ 
Since $H_{L} = \PGL_n$, we have $X_*(T^H)/X_*(T_{\Sc}^H) \cong \BZ/n\BZ.$ Making the action of the non-trivial element $\gamma$ of $\Gal(L/E)$ on a cyclic generator $\rho$ of $X_*(T^H)/X_*(T_{\Sc}^H)$ explicit, we see that $\tau(\rho) = \rho^{-1}$. Hence
\[\left(X_*(T^H)/X_*(T_{\Sc}^H)\right)_{\Gal(L/E)} =\langle \rho\rangle/\langle \rho^2 \rangle =1\]since $n$ is odd. Consequently we have that $G$ and $G_\bF$ are both quasi-split, $G_\tF$ is split, and $G(F) = G(\bF)^\sigma$. Hence, we may and do assume that the Chevalley-Steinberg system  $x_\tc: \BG_a \rightarrow U_\tc$, for $\tc \in \tilde\Phi(G,T)$, has the property that
$\gamma \circ x_\tc \circ \gamma^{-1} = x_{\gamma \cdot \tc}$ for any $\gamma \in \Gamma$. Let $n_{s_{\tc}}$ be the elements defined using this pinning as in \eqref{tilderep}. Note that \begin{align}\label{eq:GammaSteq}
    \gamma(n_{s_{\tc}}) = n_{s_{\gamma (\tc)}}
\end{align} 
for all $\gamma \in \Gamma$ and for all $\tc \in \tilde\Phi(G,T)$ whose restriction to $A$ is non-divisible.
Note that since $G$ is quasi-split over $F$, $\sigma$ stabilizes $\breve\Delta_0$. Let $\{n_{s_\ba}\;|\; \ba \in \breve\Delta_0\}$ be the representatives  in \eqref{Rep1} and  \eqref{Rep2}. It follows from \eqref{eq:GammaSteq} that $\gamma(n_{s_{\ba}}) = n_{s_{\ba}}$ for any  $\gamma \in \Gamma_0$ and hence  $\sigma(n_{s_{\ba}}) = n_{s_{\sigma(\ba)}}$ for all $\ba \in \breve\Delta_0$. 
It remains to choose $\{n_{s_\ba}\;|\; \ba \in \breve\Delta \backslash \breve\Delta_0\}$ such that $\sigma(n_{s_\ba}) = n_{s_{\sigma(\ba)}}$ for all $\ba \in \breve \Delta \backslash \breve\Delta_0$. We write $n_{s_\ba} = n_{\bb^\vee} n_{s_{\bb}}$ (see Section \ref{sec:rasr}) where $n_{s_\bb}$ is chosen using the Chevalley-Steinberg system above and where $n_{\bb^\vee}$ is as in Section \ref{sec:IndTor}. Note that since $G$ is quasi-split over $F$, the torus $T_\Sc$ is induced, so the discussion in Section \ref{sec:IndTor} applies. It follows from the discussion there that $n_{\bb^\vee} = \Nm_{\tF/\bF} \tilde\lambda(\varpi_\tF)$  for some $\tilde\lambda \in X_*(T_\Sc)$ such that $\pr(\tilde\lambda) = \bb^\vee$ and for some uniformizer $\varpi_\tF$ of $\tF$. Further,
\begin{align}\label{sstablebbd}
    \sigma(n_{\bb^\vee}) = n_{\sigma(\bb^\vee)}.
\end{align} 
Also using equation \eqref{eq:GammaSteq}, we have 
\begin{align}\label{sstablerep}
    \sigma(n_{s_\bb}) = n_{\sigma(s_\bb)}
\end{align}
Combining \eqref{sstablebbd} and \eqref{sstablerep}, we have $\sigma(n_{s_{\ba}}) = n_{s_{\sigma(\ba)}}$ for all $\ba \in \breve\Delta$. So the set $\{n_{s_\ba}\;|\; \ba \in \breve\Delta\}$ is $\sigma$-stable.

This proves (1). The claim that this set of representatives satisfy Coxeter relations follows from Corollary \ref{cor:Coxeter}.
\end{proof}

\begin{corollary}\label{cor:TitsF}
    Let $G$ be a connected, reductive group over $F$ such that $G_\bF^\der$ does not contain a simple factor of unitary type. Let $\{n_{s_\ba} \;|\; \ba \in \breve\Delta\}$ be a $\sigma$-stable set of representatives of the affine simple reflections in $\breve\Delta$ as in Proposition \ref{prop:descent}. Let $ \breve\CT_\af$ be the corresponding affine Tits group over $\bF$ generated by these $n_{s_\ba}\; \ba \in \breve\Delta$. Then $\CT_\af :=\breve\CT_\af^\sigma$ is a Tits group of $W_\af$ over $F$.
    \end{corollary}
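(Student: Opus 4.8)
The plan is to verify that $\CT_\af := \breve\CT_\af^\sigma$ meets the three requirements of Definition \ref{def:tits2}, deducing each one by taking $\sigma$-invariants of the Tits group structure of $\breve\CT_\af$ over $\bF$ (Theorem \ref{thm:TitsbF}) and using crucially that the generating set $\{n_{s_\ba}\}$ is $\sigma$-stable, with $\sigma(n_{s_\ba}) = n_{s_{\sigma\ba}}$ (Proposition \ref{prop:descent}). Write $\bw\mapsto m(\bw)$ for the Tits cross-section of $\breve\CT_\af$ furnished by Theorem \ref{thm:TitsbF}.

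The first step is the $\sigma$-equivariance $\sigma(m(\bw)) = m(\sigma\bw)$ for all $\bw\in\bW_\af$. Fixing a reduced expression $\bw = \bs_{i_1}\cdots\bs_{i_n}$ and applying $\sigma$, the identities $\sigma(n_{s_\ba}) = n_{s_{\sigma\ba}}$ rewrite $\sigma(m(\bw))$ as the product of cross-section values along the reduced expression $\sigma\bw = \sigma(\bs_{i_1})\cdots\sigma(\bs_{i_n})$, which equals $m(\sigma\bw)$ by condition 2(b) over $\bF$. In particular $m(w)\in\breve\CT_\af^\sigma = \CT_\af$ for every $w\in W_\af = \bW_\af^\sigma$. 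Applying $(-)^\sigma$ to $1\to\breve S_2\to\breve\CT_\af\xrightarrow{\breve\phi}\bW_\af\to 1$ now yields Definition \ref{def:tits2}(1): it is left-exact with kernel $\breve S_2\cap\CT_\af = \breve S_2^\sigma = S_2$ and image inside $\bW_\af^\sigma = W_\af$, and it is surjective onto $W_\af$ because each $w\in W_\af$ admits the $\sigma$-invariant lift $m(w)$. Finally $\CT_\af\subseteq N_G(S)(\bF)\cap G(F)$; since $A$ is the (canonical) maximal $F$-split subtorus of $S$, any $F$-point normalizing $S$ normalizes $A$, so $\CT_\af\subseteq N_G(A)(F)$.

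For condition 2(b) I would use the length comparison (a) of \S\ref{sec:WF}. Given a reduced expression $w = s_{i_1}\cdots s_{i_n}$ in $W_\af$, applying (a) to the successive partial products shows that $\breve\ell$ is additive along them, whence the Tits relation 2(b)$^{\dagger}$ of $\breve\CT_\af$ gives, step by step, $m(w) = m(s_{i_1})\cdots m(s_{i_n})$; via Lusztig's bijection (\S\ref{sec:WF}) each factor is the $\sigma$-invariant generator $m(s_{i_j}) = m(\bw_{\CX_{i_j}})\in\CT_\af$. Thus $\{m(w)\}_{w\in W_\af}$ is a Tits cross-section over $F$.

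The delicate point, and the main obstacle, is condition 2(a). For $a\in\Delta$ with gradient $b$, the reflection $s_a$ is the longest element $\bw_\CX$ of $\bW_\CX$ for the associated $\sigma$-orbit $\CX$, so $m(s_a) = m(\bw_\CX)$. The essential use of the hypothesis is that, once no simple factor is of unitary type, the reflections in each finite orbit $\CX$ pairwise commute: a finite orbit of non-commuting reflections would fold to a rank-two subsystem of type $A_2$ or $B_2$, i.e.\ to a factor of unitary type, and this is read off the tables of \S\ref{sec:CO} (where moreover every remaining relative root is non-multipliable, so $\bb_* = \bb$). Hence $\bw_\CX = \prod_{\ba\in\CX}s_\ba$ and $m(\bw_\CX) = \prod_{\ba\in\CX}n_{s_\ba}$ with commuting factors, and using \eqref{eq:rfsr2},
\[ m(s_a)^2 = \prod_{\ba\in\CX} n_{s_\ba}^2 = \prod_{\ba\in\CX}\bb^\vee(-1) = \Big(\sum_{\ba\in\CX}\bb^\vee\Big)(-1) = b^\vee(-1), \]
where $\bb$ is the gradient of $\ba$; the coroot identity $b^\vee = \sum_{\ba\in\CX}\bb^\vee$ follows by restricting the action of $\bw_\CX = \prod_{\ba\in\CX}s_\ba$ to the $\sigma$-fixed space $V^\sigma$ and comparing with the reflection $s_b$. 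Controlling the orbit structure (commutativity of the reflections in $\CX$) and this comparison of $\bF$- and $F$-coroots is where the real work lies; with the unitary types excluded both are governed by the explicit local data of \S\ref{sec:CO}, and \eqref{eq:rfsr2} supplies the squares, completing the verification that $\CT_\af$ is a Tits group of $W_\af$ over $F$.
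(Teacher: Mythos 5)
Your handling of the exact sequence, the $\sigma$-equivariant cross-section, and condition 2(b) matches the paper's argument (the paper routes these through the length-compatibility (a) of \S\ref{sec:WF} and condition 2(b)$^{\dagger}$ in exactly the way you do). The problem is condition 2(a), which the paper does not reprove here but defers to \cite[\S 6.7]{GH23}; your substitute argument rests on a false claim. You assert that if no simple factor of $G_\bF^\der$ is of unitary type, then the reflections in each finite $\sigma$-orbit $\CX\subset\bBS$ pairwise commute, ``since a finite orbit of non-commuting reflections would fold to a factor of unitary type.'' This conflates two different group actions: the unitary-type hypothesis constrains the \emph{inertial} structure of $G$ over $\bF$, whereas the orbits $\CX$ are orbits of the \emph{Frobenius} $\sigma$ on the affine Dynkin diagram of $G_\bF$. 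A concrete counterexample satisfying the corollary's hypothesis: take $G=\U_{2n+1}$ quasi-split for an \emph{unramified} quadratic extension $L/F$. Then $G_\bF^\der\cong\SL_{2n+1}$ is split (no unitary factor over $\bF$), the affine diagram is of type $\tA_{2n}$, and $\sigma$ acts by the flip fixing the affine node, so the orbit $\CX=\{\a_n,\a_{n+1}\}$ consists of two \emph{adjacent} nodes with $\bW_\CX\cong S_3$ finite and $\bw_\CX=s_{\a_n}s_{\a_{n+1}}s_{\a_n}$. The corresponding relative root over $F$ is multipliable (the relative system is $BC_n$), so neither your commutativity claim nor your reduction ``$\bb_*=\bb$ for every remaining root'' applies at the level of $F$.

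Consequently your computation $m(s_a)^2=\prod_{\ba\in\CX}n_{s_\ba}^2$ is not valid for such orbits: one must instead compute $\left(n_{s_\ba}n_{s_{\ba'}}n_{s_\ba}\right)^2$ using the braid relation together with the known squares $n_{s_\ba}^2=\bb^\vee(-1)$, and identify the result with $b^\vee(-1)$ for the gradient $b$ of $a$ (here $2b$ may be a relative root over $F$, so one has to be careful which coroot is meant). This is precisely the content of \cite[\S 6.7]{GH23} that the paper cites for the remaining verification. The commuting-orbit case of your argument, including the identity $b^\vee=\sum_{\ba\in\CX}\bb^\vee$, is fine; it is the $A_2$-type orbits that your proof fails to cover, and they do occur under the stated hypothesis.
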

\begin{proof}
   This proof is identical to the proof of \cite[Theorem 6.4]{GH23}. So we will just explain the construction of the Tits cross-section and refer to \cite[\S 6.7]{GH23} for the remaining details.   For any $a  \in \Delta$, we have $s_a=\bw_{\CX}$ for some $\s$-orbit $\CX$ in $\breve \Delta$ with $\bW_{\CX}$ finite (see \S \ref{sec:WF}). Let $\bw_{\CX}=\bs_{i_1} \cdots \bs_{i_n}$ be a reduced expression of $\bw_{\CX}$ in $\bW_{\af}$. Then $\bw_{\CX}=\s(\bs_{i_1}) \cdots \s(\bs_{i_n})$ is again a reduced expression of $\bw_{\CX}$ in $\bW_{\af}$. We have \begin{align*} n_{\bw_{\CX}} &=n_{\bs_{i_1}} \cdots n_{\bs_{i_n}}=n_{\s(\bs_{i_1})} \cdots n_{\s(\bs_{i_n})}=\s(n_{\bs_{i_1}}) \cdots \s(n_{\bs_{i_n}}) \\ &=\s(n_{\bw_{\CX}}).
\end{align*}

In particular, $n_{s_\ba}=n_{\bw_{\CX}} \in \CT_\af=\breve \CT_\af^{\s}$. 

Let $w \in W_{\af}$ and $s_{i_1} \cdots s_{i_n}$ be a reduced expression of $w$ in $W_\af$. We set $n_w=n_{s_{i_1}} \cdots n_{s_{i_n}}$. Then $n_w \in \CT$. Suppose that $s'_{i_1} \cdots s'_{i_n}$ is another reduced expression of $w$ in $W$. By \S \ref{sec:WF} (a), $\breve \ell(w)=\breve \ell(s_{i_1})+\cdots+\breve \ell(s_{i_n})=\breve \ell(s'_{i_1})+\cdots+\breve \ell(s'_{i_n})$. Since $\{n_{s_\ba}\;|\; \ba\in \breve\Delta\}$ satisfies the Coxeter relations, by condition (2)(b) $^\dagger$ in \S \ref{sec:tits}, $n_{s_{i_1}} \cdots n_{s_{i_1}} =n_{s_{i_1}'}  \cdots n_{s_{i_n}'}$. In other words, $n_w$ is independent of the choice of reduced expression in $W$.  In other words, the map $\phi: \CT_\af \to W_\af$ is surjective. We have $$\ker(\phi)=\ker(\breve \phi) \cap \CT_{\af}=\breve S_2 \cap \CT_{\af}=S_2.$$ 

For the proof of the fact that $\CT_\af$ is a Tits group of $W_\af$ and $\{n_w\;|\; w \in W_\af\}$ is a Tits cross-section of $W_\af$ in $\CT_\af$, see \cite[\S 6.7]{GH23}. 
\end{proof}
\subsection{Some remarks on Tits groups of inner forms of ramified unitary groups}\label{rem:TitsIFU} Let $G = U_m \subset \Res_{L/F} \GL_m$ be the quasi-split unitary group over $F$ with $L/F$ a ramified quadratic extension. Let $G^*$ be an $F$-inner form of $G$. Let $\sigma^*$ denote the Frobenius action on $G_\bF$ so that $G^*(F) = G(\bF)^{\sigma^*}$. Let $\bW_\af$ be the affine weyl group of $G(\bF)$  and let $W_\af^* = \bW_\af^\sigma$ be the affine Weyl group of $G^*(F)$.  In this section, we remark on the existence of a Tits group $\CT_\af^*$ of $W_\af^*$. 
\subsubsection{When $m$ is odd}\label{rem:Titsodd1}
As noted in the proof of Proposition \ref{prop:descent}, $G^*$ is isomorphic to $G$ over $F$. Then Remark \ref{rem:TitsOdd} completely answers the question on the existence of Tits groups of affine Weyl groups in this case.
\subsubsection{When $m$ is even}\label{rem:Titseven} Write $m = 2r$ with $r \geq 3$ and let $G = \U_{2r} \subset \Res_{L/F} \GL_{2r}$ be the quasi-split even unitary group over $F$ with $L/F$ a ramified quadratic extension. Then $G$ has a  unique non-quasi-split inner form $G^*$. Let $W_\af^*$ be the affine Weyl group of $G^*(F)$.   Note that the affine Dynkin diagram of $G_\bF$ is of type $B-C_r$:
\[\dynkin[labels={0,1}, edge length=1cm]A[2]{odd}\]
The vertex labelled 0 represents the affine simple root $\ba_0$ and the vertex labelled  1 represents the finite simple root $\ba_1$. Then $\sigma^*$ acts on this affine Dynkin diagram permutes the vertices labelled 0 and 1 and leaves all the other vertices fixed. So $s_{a_0} = s_{\ba_0} s_{\ba_1}$ (see \S \ref{sec:WF}). Then the proof in  Case (1) of Proposition \ref{prop:descent} still gives representatives $\{n_{s_\ba}\;|\; \ba \in \breve\Delta\}$ that are $\sigma$-stable, but they \textit{do not} satisfy all the Coxeter relations over $\bF$. As in the proof of Corollary \ref{cor:TitsF}, this yields a set of representatives $\{n_{s_a} \in G^*(F)\;|\; a \in \Delta\}$. Now, $\{n_{s_a} \in G^*(F)\;|\; a \in \Delta\}$ satisfies all the Coxeter relations; this can be seen by a direct calculation or using \cite[Proposition 6.1.8]{BT1}. The fact that $\{n_{s_\ba}\;|\; \ba \in \breve\Delta\}$ are $\sigma$-stable, implies that 
\[n_{s_{\ba_0}} n_{s_{\ba_1}} = n_{s_{a_0}} = \sigma(n_{s_{a_0}}) = \sigma(n_{s_{\ba_0}}) \sigma(n_{s_{\ba_1}}) = n_{s_{\ba_1}} n_{s_{\ba_0}}.\]
Hence $n_{s_{a_0}}^2 = n_{s_{\ba_1}}^2 n_{s_{\ba_0}}^2 = \bb_1^\vee(-1) \bb_0^\vee(-1) \in S_2$ by \S \ref{sec:squares}.  For $i>0$, it is clear anyway from \S \ref{sec:squares} that $n_{s_{a_i}}^2 \in S_2$. In particular, we conclude that a Tits group $\CT_\af^*$ of $W_\af^*$ always exists!

\end{document}